\newtheorem{theorem}{Theorem}[section]
\newtheorem{lemma}[theorem]{Lemma}
\newtheorem{proposition}[theorem]{Proposition}
\newtheorem{corollary}[theorem]{Corollary}
\newtheorem{remark}[theorem]{Remark}
\newtheorem{question}[theorem]{Question}
\newtheorem{conjecture}{Conjecture}
\newtheorem*{conjecture*}{Conjecture}
\newtheorem*{rmks}{Remark}
\newtheorem*{question*}{Question}
\newcommand{\R}{\mathbb{R}}
\numberwithin{equation}{section}
\title[On location of fail points]{On the location of the maximal gradient of the torsion function over some non-symmetric planar domains}
\author{Qinfeng Li}
\address{School of Mathematics, Hunan University, Changsha, P.R. China.}
\email{liqinfeng1989@gmail.com; liqinfeng@hnu.edu.cn}
\author{Shuangquan Xie}
\address{School of Mathematics, Hunan University, Changsha, P.R. China.}
\email{xieshuangquan@hnu.edu.cn}
\author{Hang Yang}
\address{School of Mathematics, Hunan University, Changsha, P.R. China.}
\email{hangyang0925@gmail.com}
\author{Ruofei Yao}
\address{School of Mathematics, South China University of Technology, Guangzhou, P.R. China.}
\email{yaorf5812@126.com; ruofeiyaopde@gmail.com}
\keywords{Torsion function; Fail point; Maximum principle; Perturbation method}
\subjclass[2020]{\text{Primary 35B50, Secondary 74B05, 35B05}}
\begin{document}

\begin{abstract}
We investigate the location of the maximal gradient of the torsion function on certain non-symmetric planar domains. First, by establishing uniform estimates for convex narrow domains, we show that as a planar domain bounded by two graphs becomes increasingly narrow, the location of the maximal gradient of its torsion function converges to the endpoints of the longest vertical segment, with smaller curvature among them. This result confirms that Saint-Venant's conjecture on the location of fail points holds for asymptotically narrow domains. Second, for triangles, we prove that the maximal gradient of the torsion function always occurs on the longest side, lying between the foot of the altitude and the midpoint of that side. Moreover, via nodal line analysis, we show that, restricted to each side, the critical point of the gradient is unique and non-degenerate. Additionally, by perturbation and barrier arguments, we establish that for a class of nearly equilateral triangles, this critical point is closer to the midpoint than to the foot of the altitude, and the maximal gradient at the midpoint exceeds that at the foot of the altitude. Third, employing the reflection method, we demonstrate that for a non-concentric annulus, the maximal gradient of the torsion function is always attained at the point on the inner boundary closest to the center of the outer boundary.
\end{abstract}

\date{\today}
\maketitle


\section{Introduction}
\subsection{Background}
This paper investigates one of the most fundamental objects in mathematical physics: the torsion function, defined as the unique solution \(u_{\Omega}\) to
\begin{equation} \label{eq102Torsion}
\begin{cases}
- \Delta u = 1 &\text{in } \Omega, \\
u = 0 &\text{on } \partial \Omega.
\end{cases}
\end{equation}
Here, $\Omega$ is often assumed to be a planar Lipschitz domain. 
The terminology ``torsion function'' goes back to Saint-Venant's elasticity theory \cite{SV1856}; see also \cite{Flu63} for additional physical background.
Although \eqref{eq102Torsion} is elementary, the torsion function has become a major research topic due to its deep connections with several areas of mathematics and its relevance in applications such as mechanical engineering and fluid mechanics. The literature on the subject continues to expand. In recent years, torsion functions have attracted considerable attention not only because of their central role in geometric functional inequalities and shape optimization (see, e.g., the monographs \cite{Hen06, Hen17} and the recent work \cite{ANT23}), but also because they serve as effective predictors for the localization of extreme values of Laplacian eigenfunctions, which has led to a number of notable results \cite{ADJ16, Bec20, FM12, RS18, Ste17, Ste22}. 
More recently, the maximization of the shape functionals \(\lvert \nabla u_{\Omega} \rvert / \sqrt{\lvert \Omega \rvert}\) and \(\lvert \nabla u_{\Omega} \rvert / \lvert \partial \Omega \rvert\) has been studied in \cite{BFM25, Hua25}, where existence results as well as structural and geometric properties of optimizers are established. 
We also refer to \cite{KM93} for a survey of qualitative properties of torsion functions on convex domains.

The present paper is motivated by the problem of determining where \(\lvert \nabla u_{\Omega} \rvert\) attains its maximum on \(\overline{\Omega}\). This problem traces back to Saint-Venant's conjecture on fail points. In Saint-Venant's elasticity theory, $\Omega$ represents the cross section of an elastic cylindrical bar, and $|\nabla {u}_{\Omega}|$ measures the shear stress. Points at which $|\nabla {u}_{\Omega}|$ attains its maximum are called \emph{fail points}, as the bar transitions from an elastic to a plastic state when $|\nabla {u}_{\Omega}|$ becomes too large. Since $|\nabla {u}_{\Omega}|^2$ is subharmonic, fail points must lie on the boundary of $\Omega$. Saint-Venant made the following conjecture: 

\begin{conjecture}[Saint-Venant's Conjecture]
For a convex planar domain that is symmetric about the two coordinate axes, fail points occur at the contact points of the largest inscribed circle.
\end{conjecture}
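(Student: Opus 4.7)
The plan is to reduce the conjecture to a one-dimensional boundary optimization using the symmetries of $\Omega$, and then combine a tangent-disk comparison with a monotonicity argument along the boundary arc. Since $|\nabla u|^2$ is subharmonic, fail points lie on $\partial \Omega$. The double symmetry forces $u$ (and hence $|\nabla u|^2$) to be even in $x$ and $y$, so it suffices to locate the maximum of $|\nabla u|$ on the quarter-arc $\Gamma = \partial \Omega \cap \{x \geq 0, y \geq 0\}$. On $\partial \Omega$ we have $|\nabla u| = |u_n|$, and a boundary identity coming from $-\Delta u = 1$ together with $u = 0$ on $\partial \Omega$ gives, in terms of arclength $s$ and curvature $\kappa$,
\[
u_{nn} = -1 + \kappa u_n, \qquad \frac{d}{ds}(u_n^2) = 2 u_n u_{ns},
\]
so critical points of $|\nabla u|$ restricted to $\Gamma$ are zeros of $u_{ns}$.

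Next, I would establish a pointwise lower bound at the contact point. Let $r^*$ be the inradius, $B^* \subset \Omega$ a largest inscribed disk, and $p^* \in \partial\Omega \cap \partial B^*$ a contact point (using symmetry, at least one lies in the closed first quadrant). Writing $u_{B^*}$ for the explicit torsion function of $B^*$, the difference $w = u - u_{B^*}$ is harmonic on $B^*$, nonnegative on $\partial B^*$ (since $u \geq 0$ on $\overline{\Omega}$ and $u_{B^*} = 0$ on $\partial B^*$), and vanishes at $p^*$. Hopf's lemma applied to $w$ at $p^*$ then yields $|\nabla u(p^*)| \geq |\nabla u_{B^*}(p^*)| = r^*/2$, matching the conjectured value at a fail point.

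The main obstacle is the converse direction: showing that $|u_n|$ on $\Gamma$ is strictly less than $r^*/2$ at every non-contact point, equivalently that $s \mapsto u_n(s)^2$ is monotone from each axial endpoint of $\Gamma$ up to the contact point. A natural idea is a tangent-disk comparison from outside at a point $q \in \Gamma$ using the largest disk tangent at $q$ contained in $\Omega$, whose radius should drop below $r^*$ away from the contact points; but such a disk is not in general inscribed in $\Omega$, so the outside comparison does not directly yield a useful maximum-principle inequality. An alternative route is to extract monotonicity from the ODE $\frac{d}{ds}(u_n^2) = 2u_n u_{ns}$ by controlling $u_{ns}$ via $\kappa$ and global convexity information, for instance through Makar-Limanov's concavity of $\sqrt{u}$. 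Converting this global concavity into pointwise sign information for $u_{ns}$ along $\Gamma$ is precisely the hard part, and to the best of my knowledge remains open in full generality. This is why the authors restrict to asymptotically narrow domains and to triangles and annuli, where uniform estimates or explicit reflection arguments can replace the missing monotonicity step.
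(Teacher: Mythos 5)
This statement is labeled as a \emph{conjecture} in the paper, and the paper does not prove it; on the contrary, the paper explicitly records that Saint-Venant's conjecture is \emph{false}. It was disproved by Ramaswamy \cite{Ram90} and Sweers \cite{Swe89, Swe92}, who constructed convex planar domains symmetric about both coordinate axes whose long and short semi-axes have equal length but whose axial endpoints have different boundary curvatures; for such domains the largest inscribed circle touches both pairs of axial endpoints, yet the gradient magnitudes there differ, so the fail points cannot coincide with the full contact set in the way the conjecture asserts. Kawohl \cite{Kaw87} proved the statement only under the additional hypothesis that the curvature of $\partial\Omega$ is monotone along the quarter-arc, which is exactly what rules out the Ramaswamy--Sweers examples.

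Your intermediate steps are sound as far as they go: subharmonicity of $|\nabla u|^2$ pushes the maximum to $\partial\Omega$, the two-axis symmetry reduces the problem to the quarter-arc, and the comparison with the torsion function of the largest inscribed disk via Hopf's lemma correctly gives $|\nabla u(p^*)| \geq r^*/2$ at a contact point. But your honest assessment that the converse direction ``remains open in full generality'' is not quite right: it is not open, it is settled in the negative. No monotonicity of $s \mapsto u_n(s)^2$ along the quarter-arc, nor any tangent-disk comparison, can be established without an extra hypothesis such as Kawohl's monotone-curvature assumption, because the conclusion itself fails for the counterexample domains. If you want a positive statement in this direction, you should either add Kawohl's curvature hypothesis, or pass to the asymptotically narrow regime of Theorem \ref{thm15narrow} and Theorem \ref{thm16NarrowLocation}, where the paper shows that the distance to the center governs the leading-order location of the fail point and curvature enters only at the next order.
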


Kawohl \cite{Kaw87} proved Saint-Venant's conjecture under the additional assumption that the curvature on $\partial\Omega$ is monotonic in the first quadrant. Under this condition, the norm of the gradient of the torsion function is also monotonic in the first quadrant. Later, Ramaswamy \cite{Ram90} and Sweers \cite{Swe89, Swe92} disproved Saint-Venant's conjecture by constructing domains symmetric about two axes such that the long and short axes are of equal length but the endpoints have different curvatures. Ramaswamy subsequently proposed a modified conjecture \cite{Ram90}, which has been widely accepted in the mechanical engineering community: 

\begin{conjecture}[Ramaswamy's Conjecture]
For a planar convex domain that is symmetric about the two coordinate axes, the fail points occur either at the contact points of the largest inscribed circle or at points of minimal curvature. 
\end{conjecture}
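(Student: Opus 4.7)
The statement is posed as a conjecture and, to my knowledge, no proof in this generality appears in the literature, so what follows is an exploratory plan rather than a ready-made argument. My strategy combines Payne's subharmonic $P$-function with a Fermi-coordinate analysis of the tangential equation on $\partial\Omega$, and attempts to close via a continuity argument starting from Kawohl's monotone-curvature case.

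First, I would exploit the two-axis symmetry: since $u := u_\Omega$ is even in both coordinates, so is $|\nabla u|^2$, hence every fail point lies in the $\mathbb{Z}_2 \times \mathbb{Z}_2$ orbit of a point on the first-quadrant boundary arc $\Gamma_+$. It therefore suffices to maximize $h(s) := |\nabla u(\gamma(s))|^2$ on $\Gamma_+$, with $\gamma$ the arclength parametrization. Payne's identity $\Delta(|\nabla u|^2 + u) = 2|D^2u|^2 - 1 \geq 0$ (valid in two dimensions) shows that the maximum of $|\nabla u|^2$ is attained on $\partial\Omega$; the Hopf lemma applied to $P = |\nabla u|^2 + u$ at a boundary maximizer $x_0$ then yields the Payne pointwise bound $|\nabla u(x_0)|\,\kappa(x_0) < 1/2$, which is at least consistent with fail points favoring small curvature.

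Next, in a Fermi frame $(s,t)$ near $\partial\Omega$, with $t$ the inward signed distance and $u = u_s = u_{ss} = 0$ on $t=0$, the curvilinear Laplacian gives the boundary identity $u_{tt} - \kappa u_t = -1$. Since $|\nabla u|^2 = u_t^2$ on $\partial\Omega$, one has $h'(s) = 2 u_t u_{st}$, so critical points of $h$ on $\Gamma_+$ are zeros of $u_{st}$; the two axis endpoints of $\Gamma_+$ are automatic critical points by symmetry and contain the inscribed-circle contact points singled out in the conjecture. Differentiating the boundary identity in $s$ yields $u_{stt}(s_0) = \kappa_s(s_0)\,u_t(s_0)$ at any interior critical point $s_0$ of $h$; a further $t$-derivative of $\Delta u = -1$ at $t = 0$ produces $u_{sst} + u_{ttt} = 2\kappa^2 u_t - \kappa$. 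Since $h''(s_0) = 2 u_t u_{sst}(s_0)$ and this last identity couples $u_{sst}$ to the triple normal derivative $u_{ttt}$, which is not pinned down by local boundary data, a purely local analysis does not suffice.

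The final step would supply the missing global input through a continuity argument. I would parametrize a path in the space of doubly symmetric $C^{2,\alpha}$ convex domains starting at a reference domain whose curvature is monotone on each quadrant arc (where Kawohl's theorem gives the conjecture) and ending at the target $\Omega$. Along the path, the critical points of $h$ on $\Gamma_+$ move continuously and can bifurcate only when $h''$ also vanishes, a degeneracy that via the identities above couples $\kappa_s$ and $u_{ttt}$; the goal is to show that new boundary critical points of $h$ are born only at points where $\kappa_s = 0$, so that the global maximum of $h$ remains trapped in the union of inscribed-circle contact points and minimal-curvature points. The main obstacle I anticipate is the bifurcation analysis at configurations where several critical points of $\kappa$ collide, which will likely require a finer linearized spectral analysis, in the spirit of the nodal-line arguments developed in the paper for triangles, together with barrier-type comparisons to keep the global maximum on the candidate set.
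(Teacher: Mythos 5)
There is a fundamental problem with the task you set yourself: the paper does not prove Ramaswamy's Conjecture, because the conjecture is \emph{false}. The paper states it only as historical background and then immediately records its refutation: ``Recently, the first and fourth authors \cite{LY24} disproved Ramaswamy's conjecture by providing a precise description of the location of fail points for nearly ball domains. They demonstrate that the location of fail points actually depends on the non-local characteristics of the domain, whereas curvature and contact points of the largest inscribed circle are local features.'' So there is no proof to compare your proposal against; the correct response to this statement is a counterexample, not an argument in its favor.

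Your plan is therefore doomed regardless of the technical merit of its individual steps. You even identify the obstruction yourself when you observe that $h''(s_0)$ is coupled to $u_{ttt}$, ``which is not pinned down by local boundary data, a purely local analysis does not suffice.'' This is precisely the mechanism behind the disproof in \cite{LY24}: the fail point is governed by non-local information, so no combination of the two local candidate sets (inscribed-circle contact points and curvature minimizers) can capture it in general. Your proposed continuity argument would have to pass through a domain where the global maximizer of $h$ drifts off the candidate set entirely, and no bifurcation bookkeeping at zeros of $\kappa_s$ can prevent that, because the degeneracy is not controlled by $\kappa_s$ alone. The useful thing you can salvage from your analysis is exactly that observation: the Fermi-coordinate computation shows the local data underdetermines $h''$, which is a clean way to explain \emph{why} the conjecture should be expected to fail. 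What the paper does instead --- and what you might align your thinking with --- is restrict to regimes where the non-local contribution is asymptotically subdominant (narrow domains, Section~2) or where extra structure substitutes for symmetry (triangles, Sections~3--4), and in those regimes it proves sharp locating statements rather than the general conjecture.
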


Recently, the first and fourth authors \cite{LY24} disproved Ramaswamy's conjecture by providing a precise description of the location of fail points for nearly circular domains. Their results indicate that the location of fail points actually depends on nonlocal features of the domain, whereas curvature and contact points of the largest inscribed circle are local characteristics. 

Finding the location of fail points is also relevant to thermal problems. Indeed, the solution $u$ to \eqref{eq102Torsion} can be interpreted as the steady-state temperature within a thermal body $\Omega$, where the heat source is uniformly distributed (with unit intensity), the heat transfer occurs solely through conduction, and the external temperature remains constant, specifically $0$. Consequently, the quantity 
\begin{equation*}
\max_{\overline{\Omega}}|\nabla u| = \max_{\partial \Omega}|\nabla u|
\end{equation*}
represents the maximum temperature flux across the boundary. Thus, the location of fail points identifies where this maximal flux occurs, which is of both theoretical and practical interest. Moreover, in the insightful work \cite{But85}, the author introduces a thin insulation problem, aiming to find the optimal distribution of insulating material to maximize the average temperature in a given domain. This problem has recently attracted renewed attention, as seen in \cite{BBN17}, \cite{DNST21}, \cite{HLL22}, \cite{HLL24}, etc. For the insulation model with a uniform heat source, it is shown in \cite{HLL24} that, for any $C^2$ domain that is not a ball, if the total amount of insulating material exceeds a certain threshold, then the optimal insulation necessitates covering the entire
boundary. When the amount is below this threshold, it is advantageous not to cover the entire boundary. The $C^2$ regularity assumption was recently removed by Figalli and Zhang, who established rigidity for Serrin's overdetermined system in rough domains \cite{FZ26}. Notably, an earlier result in \cite{ER03} shows that, as the total amount of insulating material tends to zero, the optimal distribution concentrates at boundary points of maximal temperature flux for the solution to \eqref{eq102Torsion}, that is, at the fail points. Hence, determining the location of fail points provides valuable insight for designing optimal insulation strategies.

For further historical remarks on the location of fail points, we refer to \cite{Flu63}, \cite{HS21}, \cite{Kaw87}, and references therein. It is also an important problem to determine how large the maximal gradient of the torsion function can be over all convex planar domains of fixed area, due to its relevance in various applications. In this direction, we refer to the work of Hoskins and Steinerberger \cite{HS21}, which, to our knowledge, provides so far the best upper bound for the maximal gradient. 

Despite the significant progress that has been made on the location of fail points since Saint-Venant's conjecture, many questions remain open. First, most previous research has focused on convex domains with one or two axes of symmetry, leaving domains without symmetry or non-convex domains largely unexplored. Moreover, Saint-Venant's original conjecture did not involve curvature. His intuition may have been that boundary points closer to the ``center" (i.e., the point where ${u}_{\Omega}$ attains its maximum) exhibit larger values of $|\nabla {u}_{\Omega}|$. Although Saint-Venant's conjecture has been disproved, the partial validity of his intuition has received limited attention since Kawohl's work \cite{Kaw87}, as much of the literature has focused on counterexamples. Thus, the underlying spirit of Saint-Venant's conjecture remains insufficiently explored. Whether his intuition can be rigorously formulated under reasonable assumptions, without explicit curvature information, remains an open and natural question. 

With these considerations, we study the location of fail points in three distinct classes of domains: narrow domains, triangles, and non-concentric annuli. Further motivations and our main results are detailed below.

\subsection{Our results on narrow domains}

Motivated by understanding when Saint-Venant's original conjecture can hold without imposing curvature assumptions, we begin by investigating the location of fail points on asymptotically narrow domains. Intuitively, the narrow-domain regime most closely reflects Saint-Venant's original viewpoint. Our goal is to show that, as the domain becomes increasingly narrow, fail points concentrate near the location predicted by Saint-Venant's conjecture.

We consider a family of narrow planar domains
\begin{equation}\label{eq106thinDomain}
\Omega_{\epsilon}: = \{(x, y) \in \mathbb{R}^2: \epsilon f_{1}(x) < y < \epsilon f_{2}(x), \, x\in(a, b)\}
\end{equation}
for $\epsilon > 0$, where $f_{1}, f_{2} \in C([a, b])$ satisfy 
\begin{equation}\label{eq107thinf}
f_{1}(a) = f_{2}(a) = f_{1}(b) = f_{2}(b) = 0.
\end{equation}
and $f_{2}(x) > f_{1}(x)$ for all $x \in (a, b)$. Let $u_{\epsilon}$ denote the torsion function in $\Omega_{\epsilon}$, i.e., the solution of
\[
-\Delta u_{\epsilon} = 1 \ \text{in }\Omega_{\epsilon}, \qquad u_{\epsilon} = 0 \ \text{on }\partial\Omega_{\epsilon}.
\]
By a \emph{fail point} on $\partial\Omega_{\epsilon}$ we mean a point where the boundary quantity $|\nabla u_{\epsilon}|$ attains its maximum.

\begin{theorem}\label{thm12narrow}
Let $a < b$ be fixed real numbers. 
Let $f_{1}, f_{2} \in C^2([a, b])$ satisfy~\eqref{eq107thinf}, with \(f_{1}\) convex and \(f_{2}\) concave. Let $\Omega_{\epsilon}$ be the domain defined in~\eqref{eq106thinDomain}. 

Let $I \subset (a, b)$ be the (possibly degenerate) set on which $f_{2} - f_{1}$ achieves its maximum. 
Then the distance between the $x$-coordinates of fail points on $\partial\Omega_{\epsilon}$ and the set $I$ converges to $0$ as $\epsilon \to 0^{+}$. 
\end{theorem}


Theorem~\ref{thm12narrow} only yields asymptotic information on the $x$-coordinates of fail points, which is governed by the leading $\epsilon^{2}$ term of the following pointwise expansion for the boundary gradient of $u_\epsilon$:
\begin{align}
\label{1.1expansion}
|\nabla u_\epsilon(x, f_k(x))|^2=\frac{1}{4}(f_{1}(x) - f_{2}(x))^{2}\epsilon^{2} + O(\epsilon^{4}), \quad k=1,2.
\end{align}

To further distinguish the two boundary components, one needs the precise coefficient of the next-order term. For simplicity, we consider the case where $I$ reduces to a single point, and we have the following result.

\begin{theorem}\label{thm13Location}
Under the assumptions of Theorem~\ref{thm12narrow}, assume that $f_{2} - f_{1}$ has a unique maximizer $x = z_{0} \in (a, b)$. 
If the curvature of $\partial\Omega_{1}$ at $(z_{0}, f_{1}(z_{0}))$ is strictly smaller than that at $(z_{0}, f_{2}(z_{0}))$, then there exists $\epsilon_{0} > 0$ such that for all $0 <\epsilon < \epsilon_{0}$, 
\begin{equation*}
|\nabla u_{\epsilon}(z_{0}, \epsilon f_{1}(z_{0}))| > |\nabla u_{\epsilon}(z_{0}, \epsilon f_{2}(z_{0}))|.
\end{equation*}
In particular, for sufficiently small $\epsilon$, the fail points always lie on the graph of $\epsilon f_1$ in this situation.
\end{theorem}

For example, in Figure~\ref{fig11narrow}, fail points on $\partial\Omega_{\epsilon}$ are close to the point $P_{\epsilon}$ when $\epsilon$ is small.

\begin{figure}[htp]
\centering
\begin{tikzpicture}[scale = 2]
\pgfmathsetmacro\leftx{ - 3}; 
\pgfmathsetmacro\rightx{2}; 
\pgfmathsetmacro\upy{1.1}; 
\pgfmathsetmacro\belowy{ - 0.5}; 
\node[below left] at (\leftx, 0) {$a$}; 
\node[below right] at (\rightx, 0) {$b$}; 
\node[above right] at (0.2*\rightx, \upy) {$\epsilon f_{2}$}; 
\node[below right] at (0.2*\rightx, \belowy) {$\epsilon f_{1}$}; 
\node[below] at (0, \belowy) {$P_{\epsilon}$}; 
        \fill (0,\belowy) circle (0.5pt);
\node at (\rightx*0.5, \upy*0.3) {$\Omega_{\epsilon}$}; 
        \draw[thick] plot [smooth, tension = 0.6] 
coordinates {(\leftx, 0) (0, \upy) (\rightx, 0) }; 
\draw[thick] plot [smooth, tension = 0.6] 
coordinates {(\rightx, 0) (0, \belowy) (\leftx, 0)}; 
\draw[ -> ] (1.1*\leftx, 0) -- (1.2*\rightx, 0) node[right]{$x$}; 
\draw[dashed ] (0, \belowy) -- (0, \upy, 0);
\draw[ -> ] ({(\leftx + \rightx)/2}, 1.3*\belowy) -- ({(\leftx + \rightx)/2}, 1.3*\upy, 0) node[right]{$y$}; 
\end{tikzpicture}
\caption{Planar domain bounded by two graphs}
\label{fig11narrow}
\end{figure}

A key technical ingredient of proving Theorem \ref{thm13Location} requires the $C^4$ regularity of functions $f_1$ and $f_2$, which enables us to derive the following more precise pointwise asymptotic expansion formula: for $k = 1, 2$ and $x\in(a, b)$, 
\begin{align}\label{eq104Expansion}
|\nabla u_{\epsilon}(x, \epsilon f_{k}(x))|^{2}
= \frac{1}{4}\bigl(f_{2}(x) - f_{1}(x)\bigr)^{2}\epsilon^{2}
+\epsilon^{4}\lambda_{k, 2}(x)
+O(\epsilon^{6}), 
\end{align}
as $\epsilon\to 0^{+}$, where $\lambda_{k,2}$ depends on derivatives of $f_{1}$ and $f_{2}$ up to second order; see \eqref{lambdak2} below for its explicit expression. 
The expansion \eqref{eq104Expansion} is obtained by a formal Taylor expansion of the rescaled function
\begin{equation*}
\tilde u(\epsilon, x, y): = u_{\epsilon}(x, \epsilon y)\quad \text{on }\Omega_{1}, 
\end{equation*}
namely, 
\begin{equation*}
\tilde u(\epsilon, x, y) = \sum_{j = 0}^{4}\tilde u_{j}(x, y)\epsilon^{j}+\tilde R_{4}(\epsilon, x, y).
\end{equation*}
A rigorous justification requires appropriate estimate for the remainder term $\tilde R_{4}$, where we need the technical assumption of the $C^4$ regularity of $f_1$ and $f_2$. The main difficulty is that the equation satisfied by $\tilde R_{4}$ is not uniformly elliptic in $\Omega_1$ as $\epsilon\to 0$. To overcome this, instead of seeking the precise asymptotic order of $\|\tilde R_{4}(\epsilon, \cdot)\|_{W^{1, \infty}(\Omega_1)}$, we directly estimate $R_4(\epsilon,x,y):=\tilde R_{4}(\epsilon, x, y/\epsilon)$ defined in $\Omega_\epsilon$, via a uniform $L^\infty$ estimate of sharp order for the torsion function and its gradient over $\Omega_\epsilon$; see Section~\ref{Sect2Narrow} for details.

Theorems~\ref{thm12narrow} and \ref{thm13Location} indicate that, as a planar convex domain bounded by two graphs becomes increasingly narrow, the leading-order determinant of the fail-point location is the local thickness $f_{2}-f_{1}$ (the $\epsilon^{2}$ term in \eqref{eq104Expansion}), while curvature information enters at the next order (the $\epsilon^{4}$ term). This not only realize the spirit of Saint-Venant, but also supports the following commonly cited engineering folklore for narrow domains (see \cite{Flu63}): ``the fail point occurs near one of the points of contact of the largest inscribed circle, and of these at that one where the boundary is least convex or most concave.''


In Theorem \ref{thm12narrow} we assumed $f_{1}, f_{2}\in C^{2}([a, b])$. The case where $f_i'(x)$ diverges as $x\to a$ or $x\to b$ (which occurs when a support line is perpendicular to the $x$-axis) will also be examined in Section~\ref{Sect2Narrow}, via a truncation argument and comparison with narrow rectangles.


\subsection{Our results on triangles}
We consider the location of fail points on triangular regions, which typically represent a class of domains that may not be symmetric about any axis. The reason for studying fail points on triangles is also motivated by the recent advances on the hot spot conjecture, whose theme is to determine the location of extremal points for the second eigenfunction of the Neumann-Laplacian operator. It is one of the most important locating problems in the field of partial differential equations, and most research works on this conjecture have been centered around planar graphs. Notably, groundbreaking studies on triangles, as seen in \cite{AB02, AB04, BB99, CGY26, JM20, JM22, NSY20, Roh21, Siu15} and references therein, have provided valuable insights into this conjecture. Drawing inspiration from these elegant findings, we delve into finding the location of fail points on triangles. 

First, via the moving plane method or reflection method, we first obtain the following result.

\begin{theorem}\label{thm14LongestSide}
For any triangle \(\triangle_{ABC}\), fail points can occur only on the longest side \(\Gamma\). Moreover, every fail point lies on the subsegment of \(\Gamma\) between the midpoint of \(\Gamma\) and the foot of the altitude from the opposite vertex onto \(\Gamma\).
\end{theorem}

A natural question that follows is to count the number of fail points on the boundary of a domain $\Omega$, or more generally, number of critical points of the function: 
\begin{align*}
&\partial \Omega \rightarrow \mathbb{R}, \\
&x \mapsto |\nabla u|^2(x), 
\end{align*}where $u$ is the torsion function on $\Omega$. In this direction and on triangular domains, we obtain the following result, via nodal line analysis. The proof is inspired by \cite{CGY26}, \cite{JN00} and \cite{JM20}.

\begin{theorem} \label{thm15Uniqueness}
Let $u$ be the torsion function in a triangle $\Omega = \triangle_{ABC}$. Then the critical point of the function 
$x \in \partial \Omega \mapsto |\nabla u|^2(x)$ on each side of $\triangle_{ABC}$ is unique and nondegenerate. As a result, on each side of the triangle $\triangle_{ABC}$, there is at most one fail point.
\end{theorem}

\begin{rmks}
By regularity theory, the torsion function on a triangle has a \(C^{1}\) extension to the closure of the triangle, and its gradient vanishes at the vertices. Hence, by the theorem above, on each side of the triangle the function \(s \mapsto |\nabla u(s)|\) attains its unique maximum at a single critical point.
\end{rmks}

Combining Theorem \ref{thm14LongestSide} and Theorem \ref{thm15Uniqueness}, we know that when restricted to each side, the maximal point of the norm of the gradient of the torsion function is unique, lying between the foot of the altitude and the midpoint of the side. When restricted to a given side of a triangle, since the midpoint and the foot of the perpendicular of that side are two landmark points, and also in order to obtain a more precise interval estimation of fail points on triangles, it is natural to raise the following two questions:

\begin{question}\label{OpenQue16}
On each side of a given triangle, which is closer to the maximal point of the norm of the gradient of the torsion function, the foot of the altitude or the midpoint?
\end{question}

\begin{question}\label{OpenQue17}
On each side of a given triangle, at which of the two points (the foot of the altitude or the midpoint of the side) does the magnitude of the norm of the gradient of the torsion function have a larger value?
\end{question}

Since the maximum length of a perpendicular segment drawn from a point on the longest side to the opposite boundary is attained precisely at the foot of the altitude, by Theorem \ref{thm12narrow} (or \eqref{1.1expansion}) and up to a truncation argument (see the proof of Theorem \ref{thm22NarrowBlow}), it is expected that for sufficiently narrow non-isosceles triangles, the unique fail point (on the longest side) should be closer to the foot of the altitude, and the norm of the gradient of the torsion function takes a larger value at the foot than at the midpoint.  One may wonder whether it is always the case for all non-isosceles triangles. 

In the following theorem, we give a negative answer for a kind of perturbed equilateral triangles.

\begin{theorem} \label{thm18ctm}
Let $\{\Omega_{t}\}_{t \geq 0}$ be the family of triangles with vertices $A_{t} = ( - \sqrt{3}/3, 0)$, $B_{t} = (\sqrt{3}/3 + t, 0)$ and $C_{t} = (0, 1)$, and let $u(t; x, y)$ denote the torsion function on $\Omega_{t}$. 
Then, for $t \to 0^{ + }$, the unique fail point of $\Omega_{t}$ is given by $p_{t} = (x(t), 0)$ where 
\begin{align}\label{estimatedinterval}
\frac{7}{24}t< x(t) < \frac{12}{24}t.
\end{align}
Moreover, 
\begin{align}\label{compare1}
u_{y} \Bigl(t; \frac{t}{2}, 0\Bigr) - u_{y} (t; 0, 0) > \frac{1}{32}t^2.
\end{align}
\end{theorem}

\begin{rmks}
Triangles $\Omega_{t}$ above are generated by stretching the base of the equilateral triangle $\Omega_{0}$, such that $A_{t}B_{t}$ is always the longest side(see Figure \ref{fig12ctm} below). The foot of the altitude and the midpoint of $A_{t}B_{t}$ are $(0, 0)$ and $(t/2, 0)$ respectively, and thus for $t > 0$ small, $|x(t) - t/2| < |x(t) - 0|$, due to \eqref{estimatedinterval}. Therefore, the theorem above implies that the fail point is closer to the midpoint of the longest side than to the foot of the altitude. Also, \eqref{compare1} implies that the norm of the gradient has a larger value at the midpoint.
\end{rmks}

\begin{figure}[htp]
\centering
\begin{tikzpicture}[scale = 2.5]
\pgfmathsetmacro\L{1/sqrt(3)}; 
\pgfmathsetmacro\h{1}; 
\pgfmathsetmacro\t{0.4}; 

\fill ( - \L, 0) circle (0.02 ) node[below ] {\small$A_{t}$}; 
\fill (\L, 0) circle (0.02 ) node[below ] {\small$B_{0}$}; 
\fill (\L + \t, 0) circle (0.02 ) node[below right] {\small$B_{t}$}; 
\fill (0, \h) circle (0.02 ) node[above right] {\small$C_{t}$}; 
\fill (0, 0) circle (0.02 ) node[below right] {\small$O$}; 

\draw[ -> ] ( - 0.8, 0) -- (1.5, 0) node[right] {$x$}; 
\draw[ -> ] (0, - 0.2) -- (0, 1.3) node[right] {$y$}; 

\draw ( - \L, 0) -- (\L + \t, 0); 
\draw (\L + \t, 0) -- (0, \h); 
\draw (0, \h) -- ( - \L, 0); 
\draw[thick, dotted] (0, \h) -- (\L, 0); 

\node[above right] at (0.6, 0.5*\h) {\small$\Omega^{t}$}; 
\end{tikzpicture}
\caption{Stretched equilateral triangles}
\label{fig12ctm}
\end{figure}

Therefore, in view of Theorem \ref{thm12narrow} and Theorem \ref{thm18ctm}, the answer to Questions \ref{OpenQue16}-\ref{OpenQue17} should vary depending on specific shape of a given triangle, at least restricted to the longest side. In general, these two questions are extremely delicate. Even for the special case considered in Theorem \ref{thm18ctm}, the proof of the estimates necessitate delicate barrier function constructions and comparisons of second order mixed derivatives, see Section~\ref{Sect4EquTri}. Based on numerical results, we have the following conjecture.

\begin{conjecture}
Let \(\Omega\) be a non-isosceles, non-acute triangle. Then the fail point lies on the longest side and is always closer to the foot of the altitude onto that side than to the midpoint of the side.
\end{conjecture}


\subsection{Our results on non-concentric annuli}
The third category of domains we consider comprises non-concentric annuli, which serve as typical examples of domains containing holes. Although an explicit formula for the torsion function can be derived in the case of a concentric annulus due to its radial symmetry, no such explicit formula exists for a non-concentric annulus. Through the reflection method, we establish the following theorem.

\begin{theorem} \label{thm19Annulus}
Suppose that $\Omega\subset\mathbb{R}^{2}$ is a non-concentric annulus, then the fail point in $\overline{\Omega}$ is always located at the point on the inner ring closest to the center of the outer ring.
\end{theorem}

\subsection{Outline of the paper}
In Section~\ref{Sect2Narrow}, we consider location of fail points on convex narrow domains and we will mainly prove Theorem \ref{thm12narrow} and Theorem \ref{thm13Location}. In Section~\ref{Sect3Triangle}, we consider location and uniqueness of fail points on triangles, and we will mainly prove Theorem \ref{thm14LongestSide} and Theorem \ref{thm15Uniqueness}. In Section~\ref{Sect4EquTri}, we address Questions \ref{OpenQue16}-\ref{OpenQue17} on two classes of nearly equilateral triangles, and we will prove Theorem \ref{thm18ctm} and related results. In Section~\ref{Sect5annuli}, we consider location of fail points on non-concentric annuli and we will prove Theorem \ref{thm19Annulus}.


\section{Fail points on narrow domains} \label{Sect2Narrow}

In this section, we derive asymptotic formulas for the boundary gradient of the torsion function $u_{\epsilon}$ on the narrow domain $\Omega_{\epsilon}$: 
\begin{equation}
\Omega_{\epsilon}: = \{(x, y) \in \mathbb{R}^2: \epsilon f_{1}(x) < y < \epsilon f_{2}(x), \, x\in(a, b)\}
\end{equation}
where $\epsilon > 0$, $f_{2} > f_{1}$ in $(a, b)$, and 
\begin{equation}\label{eq202thinf}
f_{k}(a) = f_{k}(b) = 0, \qquad k = 1, 2.
\end{equation}
    Throughout, \(g(\epsilon,x)=O(\epsilon^{k})\) as \(\epsilon\to0^{+}\) means that \(|g(\epsilon,x)|/\epsilon^{k}\) is uniformly bounded (in \(x\)) for all sufficiently small \(\epsilon>0\).

We begin with a useful $L^{\infty}$ bound for the torsion function $u_{\epsilon}$ and its gradient norm on \(\Omega_{\epsilon}\).

\begin{lemma} \label{lma21Estimate}
Let $f_{1}, f_{2} \in C^{2}([a, b])$ with $f_{2} > f_{1}$ in $(a, b)$, $f_1$ convex and $f_2$ concave. Then, the torsion function $u_{\epsilon}$ on $\Omega_{\epsilon}$ satisfies 
\begin{equation*}
\|u_{\epsilon}\|_{L^{\infty}(\Omega_{\epsilon})} = O(\epsilon^{2}).
\end{equation*}
and
\begin{equation*}
\|\nabla u_{\epsilon}\|_{L^\infty(\partial \Omega_\epsilon)} = \|\nabla u_{\epsilon}\|_{L^{\infty}(\Omega_{\epsilon})} = O(\epsilon). 
\end{equation*}
\end{lemma}

\begin{proof}
We set $H_{ - }: = \min_{x \in [a, b]} f_{1}(x)$ and $H_{ + } = \max_{x \in [a, b]} f_{2}(x)$. Define an auxiliary function 
\begin{equation*}
v_{\epsilon}(x, y) = \frac{1}{2}(\epsilon H_{ + } - y)(y - \epsilon H_{ - }).
\end{equation*}
A direct computation gives 
\begin{align*}
\begin{cases}
- \Delta v_{\epsilon} = 1, \quad &\text{in $\Omega_{\epsilon}$}\\
v_{\epsilon} \geq 0 &\text{on $\partial \Omega_{\epsilon}$}
\end{cases}
\end{align*}
Applying the maximum principle to $v_{\epsilon} - u_{\epsilon}$, we conclude that \(v_{\epsilon} - u_{\epsilon} \geq 0\) in \(\Omega_{\epsilon}\). Consequently, 
\begin{equation*}
\|u_{\epsilon}\|_{L^{\infty}(\Omega_{\epsilon})} \leq \|v_{\epsilon}\|_{L^{\infty}(\Omega_{\epsilon})} \leq \frac{1}{8}\epsilon^{2}(H_{ + } - H_{ - })^{2}=O(\epsilon^2). 
\end{equation*}

Using the $C^{1}$ regularity of $u_{\epsilon}$ over $\overline{\Omega}_{\epsilon}$ (see, for example, ~\cite{Gri11}) and the classical inequality \cite[Eq.~6.12]{Spe81} (see also \cite{HS21}) for planar convex domains, we obtain the uniform upper bound for $\|\nabla u_{\epsilon}\|_{L^{\infty}(\Omega_{\epsilon})}$ as follows: 
\begin{equation*}
\|\nabla u_{\epsilon}\|_{L^{\infty}(\partial\Omega_{\epsilon})} = \|\nabla u_{\epsilon}\|_{L^{\infty}(\Omega_{\epsilon})} < 2\|u_{\epsilon}\|^{1/2}_{L^{\infty}(\Omega_{\epsilon})} = O(\epsilon). 
\end{equation*}
    This concludes the proof. 
\end{proof}


\begin{proof}[Proof of Theorem~\ref{thm12narrow}]
Let $$\phi_\epsilon(x,y)=-\frac{1}{2}\left(y-\epsilon f_1(x)\right) \left(y-\epsilon f_2(x)\right).$$
Then \begin{align*}
\begin{cases}
-\Delta \phi_\epsilon=1+O(\epsilon^2)\quad &\mbox{in $\Omega$}\\
\phi_\epsilon=0\quad &\mbox{on $\partial \Omega$}
.\end{cases}
\end{align*}
Moreover, 
\begin{align*}
|\nabla \phi_\epsilon(x,\epsilon f_k(x))|^2=\frac{1}{4}(f_{1}(x) - f_{2}(x))^{2}\epsilon^{2} + O(\epsilon^{3}), \quad k=1,2.
\end{align*}
Let $w_\epsilon=u_\epsilon-\phi_\epsilon$. Then
 \begin{align*}
\begin{cases}
\Delta w_\epsilon=O(\epsilon^2)\quad &\mbox{in $\Omega$}\\
w_\epsilon=0\quad &\mbox{on $\partial \Omega$}
.\end{cases}
\end{align*}
There exists a universal constant $M>0$ such that $\epsilon^2 Mu_\epsilon\pm w_\epsilon$ is superharmonic in $\Omega_\epsilon$. By the maximum principle and the boundary Hopf lemma, we have
\begin{align*}
\Vert w_\epsilon\Vert _{L^\infty(\Omega_\epsilon)}\le \epsilon^2 M \Vert  u_\epsilon \Vert_{L^\infty(\Omega_\epsilon)},\quad \Vert \nabla w_\epsilon\Vert _{L^\infty(\partial \Omega_\epsilon)}\le \epsilon^2 M \Vert  \nabla u_\epsilon \Vert_{L^\infty(\partial \Omega_\epsilon)}.
\end{align*}
From Lemma~\ref{lma21Estimate}, we thus have
\begin{align*}
\Vert w_\epsilon\Vert _{L^\infty(\Omega_\epsilon)}=O(\epsilon^4),\quad \Vert \nabla w_\epsilon\Vert _{L^\infty(\partial \Omega_\epsilon)}=O(\epsilon^3).
\end{align*}
Therefore,
\begin{align*}
|\nabla u_\epsilon(x, f_k(x))|^2=&|\nabla \phi_\epsilon(x,f_k(x))|^2+2\nabla \phi_\epsilon(x, f_k(x))\cdot \nabla w_\epsilon(x,f_k(x))+|\nabla w_\epsilon(x,f_k(x))|^2\\
=&\frac{1}{4}(f_{1}(x) - f_{2}(x))^{2}\epsilon^{2} + O(\epsilon^{4}), \quad k=1,2.
\end{align*}
Thus, the maximum of $|\nabla u_{\epsilon}|^{2}$ along $\partial\Omega_{\epsilon}$ is governed at order $\epsilon^{2}$ by the maximizers of $f_{2}-f_{1}$, and hence the $x$-coordinates of fail points converge to the set $I$ as $\epsilon\to 0^{+}$.
\end{proof}

To prove Theorem \ref{thm13Location}, we need  a higher order expansion of $\nabla u_\epsilon$ in terms of $\epsilon$, as seen in the following proposition.

\begin{proposition}\label{pro22Expansion}
Let $f_{1} \in C^{4}([a, b])$ be convex and $f_{2} \in C^{4}([a, b])$ be concave satisfying \eqref{eq202thinf}. Then for $k = 1, 2$, 
\begin{align} \label{eq204Expansion}%
|\nabla u_{\epsilon}(x, \epsilon f_{k}(x))|^{2} = \frac{1}{4}\left(f_{2}(x) - f_{1}(x)\right)^{2}\epsilon^{2} + \epsilon^{4}\lambda_{k, 2}(x) + O(\epsilon^{6}), 
\end{align}
where $\lambda_{k, 2}$ is a quantity depending on the derivatives of $f_{1}$ and $f_{2}$ up to second order, and is given explicitly by
\begin{equation}\label{lambdak2}
\lambda_{k, 2} = (a_{1}'f_{k} + a_{2}')^{2} + (f_{k} - a_{1})(f_{k}^{2}a_{1}'' + 2f_{k}a_{2}'' - 2a_{3}), 
\end{equation}
for $k = 1, 2$, with 
\begin{equation} \label{ais}
\begin{cases}
a_{1}(x) = \tfrac{1}{2}(f_{1}(x) + f_{2}(x)), 
\\
a_{2}(x) = - \tfrac{1}{2}f_{1}(x)f_{2}(x), 
\\
a_{3}(x) = \tfrac{1}{6}a_{1}''(x)(f_{1}^{2}(x) + f_{1}(x)f_{2}(x) + f_{2}^{2}(x)) + \tfrac{1}{2}a_{2}''(x)(f_{1}(x) + f_{2}(x)).
\end{cases}
\end{equation}
\end{proposition}

\begin{proof}
Let $\Tilde{u}(\epsilon, x, y) = u_{\epsilon}(x, \epsilon y)$. Then $\Tilde{u}$ satisfies
\begin{align} \label{eq:v}
\begin{cases}
\epsilon^2(\Tilde{u})_{xx} + (\Tilde{u})_{yy} = - \epsilon^2, \quad &\text{in $\Omega_{1}$}\\
\Tilde{u} = 0, &\text{on $\partial \Omega_{1}$}.
\end{cases}
\end{align}
We write 
\begin{align}
\label{zhankai}
\Tilde{u}(\epsilon, x, y) = \sum_{k = 0}^{4} \Tilde{u}_k(x, y)\epsilon^k + \Tilde{R}_{4}(\epsilon, x, y), 
\end{align}
where $\Tilde{u}_{0} = \Tilde{u}_{1} = \Tilde{u}_{3} = 0$, 
\begin{equation*}
\Tilde{u}_{2}(x, y) = - \frac{1}{2}(y - f_{1}(x))(y - f_{2}(x)), 
\end{equation*}
and 
\begin{equation*}
\Tilde{u}_{4}(x, y) = - \frac{1}{6}y^{3}a_{1}''(x) - \frac{1}{2}y^{2}a_{2}''(x) + a_{3}(x)y + a_{4}(x), 
\end{equation*}
where the coefficients $a_{1}$, $a_{2}$, $a_{3}$ are given in~\eqref{ais} and 
        \[a_{4}(x) =\frac{1}{6}f_{1}^{3}(x)a_{1}''(x) + \tfrac{1}{2}a_{2}''(x)f_{1}^{2}(x) - a_{3}(x)f_{1}(x).\]
The functions $\Tilde{u}_k$, $1 \leq k \leq 4$ are chosen so that~\eqref{eq:v} is satisfied formally upon matching coefficients of \(\epsilon^{k}\). 

By \eqref{eq:v} and \eqref{zhankai}, the remainder $\Tilde{R}_{4}$ satisfies the equation
\begin{align}
\begin{cases}
\epsilon^2(\Tilde{R}_{4})_{xx} + (\Tilde{R}_{4})_{yy} = - \epsilon^{6}(\Tilde{u}_{4})_{xx}, \quad &\text{in $\Omega_{1}$}\\
\Tilde{R}_{4} = 0, &\text{on $\partial \Omega_{1}$}.
\end{cases}
\end{align}
Hence
\begin{align}\label{expressionofuepsilon}
u_{\epsilon}(x, y) = \Tilde{u}_{2}(x, \epsilon^{ - 1}y)\epsilon^2 + \Tilde{u}_{4}(x, \epsilon^{ - 1}y)\epsilon^{4} + R_{4}(\epsilon, x, y), 
\end{align}
where $R_{4}(\epsilon, x, y) = \Tilde{R}_{4}(\epsilon, x, \epsilon^{ - 1}y)$ is the error term satisfying 
\begin{align}
\begin{cases}
\Delta R_{4}(\epsilon, x, y) = - \epsilon^{4}(\Tilde{u}_{4})_{xx}(x, \epsilon^{ - 1}y) = O(\epsilon^{4}), \quad &\text{in $\Omega_{\epsilon}$}\\
R_{4} = 0, &\text{on $\partial \Omega_{\epsilon}$}.
\end{cases}
\end{align}
There exists a constant $M > 0$ such that $\epsilon^{4}Mu_{\epsilon} \pm R_{4}$ is superharmonic in $\Omega_{\epsilon}$. By the maximum principle, we deduce that 
\begin{align*}
\|R_{4}\|_{L^{\infty}(\Omega_{\epsilon})} \leq \epsilon^{4} M \|u_{\epsilon}\|_{L^{\infty}(\Omega_{\epsilon})}, 
\end{align*}
and consequently, 
\begin{align*}
\|\nabla R_{4}\|_{L^{\infty}(\partial\Omega_{\epsilon})} \leq \epsilon^{4} M \|\nabla u_{\epsilon}\|_{L^{\infty}(\partial \Omega_{\epsilon})}.
\end{align*}
By Lemma \ref{lma21Estimate}, we therefore have
\begin{equation*}
\|R_{4}\|_{L^{\infty}(\Omega_{\epsilon)}} = O(\epsilon^{6}), \quad \|\nabla R_{4}\|_{L^{\infty}(\partial \Omega_{\epsilon})} = O(\epsilon^{5}). 
\end{equation*}
In view of the expression of $u_{\epsilon}$ given by \eqref{expressionofuepsilon} and after somewhat lengthy but straightforward calculations, we have
\begin{equation*}
|\nabla u_{\epsilon}(x, \epsilon f_{k}(x))|^{2} = \epsilon^{2}\lambda_{k, 1}(x) + \epsilon^{4}\lambda_{k, 2}(x) + O(\epsilon^{6}), \quad k = 1, 2, 
\end{equation*}
where 
\begin{align} \label{lambdak} 
\begin{cases}
\lambda_{k, 1} = (f_{2} - f_{1})^2/4\\
\lambda_{k, 2} = (a_{1}'f_{k} + a_{2}')^{2} + (f_{k} - a_{1})(f_{k}^{2}a_{1}'' + 2f_{k}a_{2}'' - 2a_{3})
\end{cases}
\quad k = 1, 2.
\end{align}
Plugging the formulas for $a_{i}$, $i = 1, 2, 3$ in~\eqref{ais} into~\eqref{lambdak} completes the proof.
\end{proof}

Now we are ready to prove Theorem \ref{thm13Location}.

\begin{proof}[Proof of Theorem \ref{thm13Location}]
Since $h: = f_{2} - f_{1}$ attains its maximum at $x = z_{0}$, $f_{1}'(z_{0}) = f_{2}'(z_{0})$. Using this fact, we have 
\begin{equation*}
(a_{1}'f_{1} + a_{2}')^{2}(z_{0}) = (a_{1}'f_{2} + a_{2}')^{2}(z_{0}). 
\end{equation*} 
We let $\Lambda(z_{0}) = |\nabla u_{\epsilon}(z_{0}, \epsilon f_{2}(z_{0}))|^{2} - |\nabla u_{\epsilon}(z_{0}, \epsilon f_{1}(z_{0}))|^{2}$. From Proposition~\ref{pro22Expansion} (equivalently \eqref{eq204Expansion}, direct computation yield
\begin{align*}
\Lambda& = \epsilon^{4}\{(f_{2} - a_{1})(f_{2}^{2}a_{1}'' + 2f_{2}a_{2}'' - 2a_{3}) - (f_{1} - a_{1})(f_{1}^{2}a_{1}'' + 2f_{1}a_{2}'' - 2a_{3})\} + O(\epsilon^{6})\\
& = \frac{1}{6}\epsilon^{4}a_{1}''(f_{2} - f_{1})^{3} + O(\epsilon^{6})\\
& = \frac{1}{12}\epsilon^{4}(f_{1}'' + f_{2}'')(f_{2} - f_{1})^{3} + O(\epsilon^{6}).
\end{align*}
Therefore, $|\nabla u_{\epsilon}(z_{0}, \epsilon f_{2}(z_{0}))| < |\nabla u_{\epsilon}(z_{0}, \epsilon f_{1}(z_{0}))|$ provided that $f_{1}''(z_{0}) + f_{2}''(z_{0}) < 0$, which is equivalent to
$k(z_{0}, f_{2}(z_{0})) > k(z_{0}, f_{1}(z_{0}))$ where $k$ is the curvature function on $\partial \Omega_{1}$.
\end{proof}

Now we consider the case of $C^2$ domains, that is, the derivatives $f_{k}', \, i=1,2$ diverge at the endpoints $a$ and $b$. We have the following result.
\begin{theorem} \label{thm22NarrowBlow}
For each \(\epsilon > 0\), let
\(
\Omega_{\epsilon}: = \{(x, y) \in \mathbb{R}^2: \epsilon f_{1}(x) < y < \epsilon f_{2}(x), \ x\in(a, b)\}
\)
be a bounded \(C^{2}\) convex domain, where \( - f_{1}\) and \(f_{2}\) are positive functions on \((a, b)\) and satisfy \eqref{eq202thinf} at \(x=a, b\). Let \(u_{\epsilon}\) denote the torsion function on \(\Omega_{\epsilon}\). 
Then for any $\delta > 0$, there exists $\epsilon_{0} > 0$ such that for all $\epsilon < \epsilon_{0}$, 
\begin{align}
|\nabla u_{\epsilon}(x, \epsilon f_{k}(x))|^{2} = \frac{1}{4}(f_{1}(x) - f_{2}(x))^{2}\epsilon^{2} + O(\epsilon^{3}), \quad \text{for }x \in (a + \delta, b - \delta), 
\end{align}
for $k = 1, 2$.
\end{theorem}

\begin{theorem} \label{thm23EndSide}
Let $(\Omega_{\epsilon})_{\epsilon > 0}$ be a class of bounded $C^2$ domains as in Theorem \ref{thm22NarrowBlow}.
Then, the endpoints $(a, 0), (b, 0)$ cannot be fail points, provided $\epsilon > 0$ is sufficiently small.
\end{theorem}

\begin{proof}[Proof of Theorem \ref{thm22NarrowBlow}] 
Let 
\begin{equation*}
D_{\epsilon}: = \{a + \delta/2 < x < b - \delta/2\}\cap \Omega_{\epsilon}
\end{equation*} 
and $\eta(x): [a,b] \to [0,1]$ be a smooth cut-off function defined by
\begin{align*}
\eta(x) = 
\begin{cases}
1, \quad &x\in[a + \delta, b - \delta]\\
0, &x \leq a + \delta/2 \quad\text{and}\quad x \geq b - \delta/2
\end{cases}
\end{align*}
with $\eta_{x}$, $\eta_{xx}$ uniformly bounded in $[a, b]$.
Let $w_{\epsilon}(x, y): = u_{\epsilon}(x, y) - \phi_\epsilon(x, y)$ where
$\phi_{\epsilon}(x, y)$ is as in the proof of Theorem \ref{thm12narrow}. That is, 
\begin{align*}
\phi_\epsilon(x, y) = - \frac{1}{2}(y - \epsilon f_{1}(x))(y - \epsilon f_{2}(x)).
\end{align*}
We have
\begin{align}\label{cutoffequation}
\begin{cases}
\Delta(w_{\epsilon}\eta) = \Delta w_{\epsilon} \eta + 2(w_{\epsilon})_x \eta_x + w_{\epsilon}\eta_{xx}\quad &\text{in $D_{\epsilon}$}\\
w_{\epsilon}\eta = 0 &\text{on $\partial D_{\epsilon}$}
\end{cases}
\end{align}
Through direct computation, $\Delta w_{\epsilon} = O(\epsilon^2)$ in $D_\epsilon$, $\|\phi_{\epsilon}\|_{L^{\infty}(D_{\epsilon})} = O(\epsilon^{2})$ and $\|\nabla \phi_{\epsilon}\|_{L^{\infty}(D_{\epsilon})} = O(\epsilon)$. 
Now we estimate $\|(w_{\epsilon})_{x}\|_{L^{\infty}(D_{\epsilon})}$ and $\|w_{\epsilon}\|_{L^{\infty}(D_{\epsilon})}$. By Lemma \ref{lma21Estimate}, we have $$\|(w_{\epsilon})_{x}\|_{L^{\infty}(D_{\epsilon})} \leq \|(u_{\epsilon})_{x}\|_{L^{\infty}(D_{\epsilon})} + \|(\phi_{\epsilon})_{x}\|_{L^{\infty}(D_{\epsilon})} = O(\epsilon)$$ and $$\|w_{\epsilon}\|_{L^{\infty}(D_{\epsilon})} \leq \|u_{\epsilon}\|_{L^{\infty}(D_{\epsilon})} + \|\phi_{\epsilon}\|_{L^{\infty}(D_{\epsilon})} = O(\epsilon^{2}).$$ 
Hence from \eqref{cutoffequation} we have
\begin{align}
\begin{cases}
- \Delta (w_{\epsilon}\eta) = O(\epsilon), \quad &\text{in $D_{\epsilon}$}\\
w_{\epsilon}\eta = 0, &\text{on $\partial D_{\epsilon}.$}
\end{cases}
\end{align}
Similar to the derivation of the gradient estimate of $R_{4}$ in the proof of Proposition \ref{pro22Expansion}, we have 
\begin{equation*}
\|\nabla (w_{\epsilon}\eta)\|_{L^{\infty}(\partial D_{\epsilon})} = O(\epsilon^{2}).
\end{equation*}
Hence 
\begin{equation*}
|\nabla w_{\epsilon}| = O(\epsilon^{2}), \quad \text{uniformly on $\partial \Omega_{\epsilon}\cap \{a + \delta < x < b - \delta\}$}.
\end{equation*}
In view that $u_{\epsilon} = \phi_{\epsilon} + w_{\epsilon}$, for $k = 1, 2$ and $x \in (a + \delta, b - \delta)$, we have
\begin{equation*}
|\nabla u_{\epsilon}(x, \epsilon f_{k}(x))|^{2} = |\nabla \phi_{\epsilon}|^2 + O(\epsilon^3) = \frac{1}{4}(f_{1}(x) - f_{2}(x))^{2}\epsilon^{2} + O(\epsilon^{3}).
\end{equation*}
This finishes the proof.
\end{proof}

In order to prove Theorem \ref{thm23EndSide}, we study the asymptotic behavior of norm of the gradient of torsion function on narrow rectangles.

\begin{lemma}\label{lma25Rectangle}
Let $0 < \epsilon < 1$, $R_{\epsilon} = [0, 1]\times[ - \epsilon, \epsilon]$ be a rectangle in $\R^{2}$ and $u_{\epsilon}$ be the torsion function in $R_{\epsilon}$. Then for any point $p$ on the shorter sides of $R_{\epsilon}$, we have 
\begin{equation*}
|\nabla u_{\epsilon}(p)| = O(\epsilon^{2}).
\end{equation*}
\end{lemma}

\begin{proof}
First, by the moving plane method, $|\nabla u_{\epsilon}|(p) \leq |\nabla u_{\epsilon}|(0, 0)$ for any $p$ in the shorter sides of the rectangle $R_{\epsilon}$.

By separation of variable argument, the torsion function in rectangle $R_{\epsilon}$ is 
\begin{equation*}
u_{\epsilon}(x, y) = \frac{1}{2}x(1 - x) - \frac{2}{\pi^{3}}\sum_{n \geq 1}\frac{1 - ( - 1)^{n}}{n^{3}\cosh(n\pi \epsilon)}\sin (n\pi x)\cosh (n\pi y), 
\end{equation*}
and thus we have
\begin{equation*}
f(\epsilon): = |\nabla u_{\epsilon}|(0, 0) = |(u_{\epsilon})_{x}|(0, 0) = \frac{1}{2} - \frac{4}{\pi^{2}}\sum_{k \geq 0}\frac{1}{(2k + 1)^{2}\cosh(n\pi \epsilon)}. 
\end{equation*}
By direct computation, $f(0) = f'(0) = 0$. Hence $|\nabla u_{\epsilon}|(0, 0) = f(\epsilon) = O(\epsilon^{2})$. We complete the proof.
\end{proof}

\begin{proof}[Proof of Theorem \ref{thm23EndSide}]
Let 
\begin{equation*}
H = \max \Big\{\max_{z\in[a, b]}|f_{1}(z)|, \; \max_{z \in [a, b]}|f_{2}(z)|\Big\}.
\end{equation*} 
Then the rectangle $R_{\epsilon}: = [a, b]\times [ - \epsilon H, \epsilon H]$ contains $\Omega_{\epsilon}$ with $(a, 0), (b, 0) \in \partial R_{\epsilon}\cap \partial \Omega_{\epsilon}$. Let $u_{\Omega_{\epsilon}}$ be the torsion function in $\Omega_{\epsilon}$ and $v_{\epsilon}$ be the torsion function in $R_{\epsilon}$. By the maximum principle, Hopf lemma and Lemma \ref{lma25Rectangle}, we have
\begin{align*}
|\nabla u_{\Omega_{\epsilon}}(p)| \leq |\nabla v_{\epsilon}(p)| = O(\epsilon^{2})
\end{align*}
for $p = (a, 0)$ or $(b, 0)$.

On the other hand, the inradius of $\Omega_{\epsilon}$ is bounded below by $c\epsilon$ for some $c > 0$. By the Hopf Lemma and the torsion function's formula on disks, we derive that the norm of the gradient of the torsion function on $\Omega_{\epsilon}$ at the contact point of some maximal disk contained in $\Omega_{\epsilon}$ is bounded below by some $c\epsilon$, with $c > 0$. Consequently, fail points cannot occur at $(a, 0)$ or $(b, 0)$ when $\epsilon > 0$ is small enough.
\end{proof}


\section{Location and uniqueness of fail points on triangles} \label{Sect3Triangle}

In this section, we study the location and number of fail points in triangles. Throughout this paper, every triangle is understood as an open domain (i.e., the interior of the corresponding closed triangle). We write \(\triangle_{ABC}\) for the open triangle with vertices \(A\), \(B\), and \(C\). Given two points \(A\) and \(B\), we use \(AB\) to denote the closed line segment joining them, and \(|AB|\) to denote its length.

\begin{proposition} \label{PropCompareLength}
For a triangle, the fail point must lie exclusively on the longest side.
\end{proposition}

\begin{proof}
Let $u$ be the torsion function in the triangle $\triangle_{ABC}$. Without loss of generality, assume that $|AC| < |AB|$. We claim that for any point $p$ on the shorter side $AC$, there exists a point $p'$ on the longer side $AB$ such that $|\nabla u(p)| < |\nabla u(p')|$. 

By employing suitable rigid transformation of the triangle, we may place vertex $A$ at the origin, $B$ on the positive $x$-axis, and $C$ in the upper half-plane. Let $C'$ be the point on $AB$ such that $|AC'| = |AC|$. Further, let $Q$ be a point on $BC$ such that the line $AQ$ bisects the angle $\angle BAC$. With this construction, the triangles $\triangle_{AQC}$ and $\triangle_{AQC'}$ are symmetric with respect to the line $AQ$ (see Figure \ref{fig31} for illustration). 

\begin{figure}[htp]
\centering
\begin{tikzpicture}[scale = 2]
\pgfmathsetmacro\jiaoA{70}; 
\pgfmathsetmacro\LenB{4.0000}; 
\pgfmathsetmacro\LenC{\LenB*0.4000}; 
\pgfmathsetmacro\Cx{\LenC*cos(\jiaoA)}; 
\pgfmathsetmacro\Cy{\LenC*sin(\jiaoA)}; 
\pgfmathsetmacro\px{0.6*\Cx}; 
\pgfmathsetmacro\py{0.6*\Cy}; 
\pgfmathsetmacro\qx{0.6*\LenC}; 
\pgfmathsetmacro\qy{0}; 
\pgfmathsetmacro\Qx{\LenC*sin(\jiaoA)*\LenB/((\LenB - \LenC*cos(\jiaoA))*(tan(\jiaoA/2) + \LenC*sin(\jiaoA)/(\LenB - \LenC*cos(\jiaoA))))}; 
\pgfmathsetmacro\Qy{tan(\jiaoA/2)*\Qx}; 
\fill[gray, yellow] (0, 0) -- (\Qx, \Qy) -- (\LenC, 0) -- cycle; 
\fill[gray, green] (0, 0) -- (\Qx, \Qy) -- (\Cx, \Cy) -- cycle; 
\draw[black, dashed] (\Qx, \Qy) -- (\LenC, 0); 
\draw[thick, red] (0, 0) -- (\Qx, \Qy); 
\fill (\px, \py) circle (0.02) node[left] {\small ${p}$}; 
\fill (\qx, \qy) circle (0.02) node[below] {\small ${p'}$}; 
\draw[dotted, thick] (\px, \py) -- (\qx, \qy); 
\node[below left] at (0, 0) {$A$}; 
\node[below right] at (\LenB, 0) {$B$}; 
\node[left] at (\Cx, \Cy) {$C$}; 
\fill (\LenC, 0) circle (0.02) node[below] {\small ${C'}$}; 
\fill (\Qx, \Qy) circle (0.02) node[above right] {\small ${Q}$}; 
\draw[thick] (0, 0) -- (\LenB, 0) -- (\Cx, \Cy) -- cycle; 
\end{tikzpicture}
\caption{An illustration of the proof of Proposition \ref{PropCompareLength}: If $|AB| > |AC|$ and $AQ$ bisects the angle $\angle BAC$, then $|\nabla u(p)| < |\nabla u(p')|$, for any $p \in AC$, with $p'$ being the reflection of $p$ about $AQ$.}
\label{fig31}
\end{figure}

For any $(x, y) \in \triangle_{AQC}$, define 
\begin{equation*}
{w} (x, y) = {u} (x, y ) - {u} (x', y'), 
\end{equation*}
where $(x', y')$ is the reflection of $(x, y)$ with respect to the angle bisector $AQ$. That is, 
\begin{align*}
(x', y') = (x\cos \alpha + y\sin\alpha, x\sin\alpha - y\cos\alpha), 
\end{align*}
where $\alpha$ is the interior angle at vertex $A$. Observing that 
\begin{equation*}
\begin{cases}
\Delta w = 0 & \text{in } \triangle_{AQC}, \\
w = 0 & \text{on } AC \cup AQ, \\
w < 0 & \text{on } CQ \setminus \{C, Q\}, 
\end{cases}
\end{equation*} 
so by the strong maximum principle, 
\begin{equation} \label{eq302reflection}
w < 0 \text{ in } \triangle_{AQC}.
\end{equation}
Since $w$ vanishes on $AC$, the Hopf lemma yields that for any $p = (r\cos\alpha, r\sin\alpha) \in AC \setminus \{A, C\}$, 
\begin{equation} 
\partial_{\nu_p}w(p) = \frac{\partial u}{\partial \nu_{p}}(p) - \frac{\partial u}{\partial \nu_{p'}}(p') = - |\nabla u|(p) + |\nabla u|(p') > 0, 
\end{equation} 
where $p' = (r, 0) \in AB$ is the reflection of $p$ about the angle bisector $AQ$. 

Therefore, for any point $p$ on $AC$, there exists a point $p'$ on $AB$ such that $|\nabla u(p)| < |\nabla u(p')|$. This shows that no fail point can exist on the side $AC$. 
\end{proof}

In the following, when we refer to a \textit{critical point} of $|\nabla u|^2$ on an open side $\Gamma$ of a triangle, we mean a critical point of the function
\begin{align*}
&\Gamma \to \mathbb{R}, \\
&x \mapsto |\nabla u|^2(x).
\end{align*}

\begin{proposition} \label{PropLocation}
Let \(u\) be the torsion function on \(\Omega = \triangle_{ABC}\). Then, for each open side \(\Gamma\) of \(\Omega\), the restriction of \(|\nabla u|^{2}\) to \(\Gamma\) has at least one critical point, and every such critical point lies on the segment joining the midpoint of \(\Gamma\) to the foot of the altitude from the opposite vertex onto \(\Gamma\).
\end{proposition}

\begin{proof}

It suffices to consider the side \(AB\).
Since \(|\nabla u| = 0\) at \(A\) and \(B\) (by the \(C^{1}\) boundary regularity of the torsion function on triangles) and \(|\nabla u| > 0\) in the interior of \(AB\), the continuous function \(|\nabla u|^{2}\) attains its maximum on the closed segment \(AB\) at some interior point. In particular, \(|\nabla u|^{2}\) has at least one critical point on the open segment \(AB\).
        

Without loss of generality, assume \(|CA|\leq |CB|\), place \(AB\) on the horizontal axis with \(A=(0,0)\) and \(B=(\ell_{0},0)\) for some \(\ell_{0}>0\), and take \(C\) in the upper half-plane. 
        Let \(M=(\ell_{0}/2,0)\) be the midpoint of \(AB\), and let \(F=(\ell_{1},0)\) be the foot of the altitude from \(C\) onto the line \(AB\), where \(\ell_{1}\leq \ell_{0}/2\). Let \(Q=(\ell_{0}/2,h)\in BC\) be the point such that \(QM\perp AB\); see Figure~\ref{fig32}.

\begin{figure}[htp]
\centering
\begin{tikzpicture}[scale = 2]
\pgfmathsetmacro\jiaoA{70}; 
\pgfmathsetmacro\LenB{4.0000}; 
\pgfmathsetmacro\LenC{\LenB*0.4000}; 
\pgfmathsetmacro\Cx{\LenC*cos(\jiaoA)}; 
\pgfmathsetmacro\Cy{\LenC*sin(\jiaoA)}; 
\pgfmathsetmacro\Mx{\LenB/2}; 
\pgfmathsetmacro\My{0}; 
\pgfmathsetmacro\Qx{\Mx}; 
\pgfmathsetmacro\Qy{\LenB*\LenC*sin(\jiaoA)/(2*(\LenB - \LenC*cos(\jiaoA)))}; 
\pgfmathsetmacro\Fx{\Cx}; 
\pgfmathsetmacro\Fy{0}; 
\node[below left] at (0, 0) {$A$}; 
\node[below right] at (\LenB, 0) {$B$}; 
\node[left] at (\Cx, \Cy) {$C$}; 
\draw[thick] (0, 0) -- (\LenB, 0) -- (\Cx, \Cy) -- cycle; 
\fill[gray, yellow] (0, 0) -- (\Qx, \Qy) -- (\Mx, \My) -- cycle; 
\fill[gray, green] (\LenB, 0) -- (\Qx, \Qy) -- (\Mx, \My) -- cycle; 
\draw[black, dashed] (0, 0) -- (\Qx, \Qy); 
\draw[dotted, thick] (\Fx, \Fy) -- (\Cx, \Cy); 
\draw[thick, red] (\Qx, \Qy) -- (\Mx, \My); 
\fill (\Fx, \Fy) circle (0.02) node[below] {\small ${F}$}; 
\fill (\Qx, \Qy) circle (0.02) node[above right] {\small ${Q}$}; 
\fill (\Mx, \My) circle (0.02) node[below] {\small ${M}$}; 
\end{tikzpicture}
\caption{An illustration of proof of Proposition \ref{PropLocation}: $M$ is the midpoint of $AB$, $F$ is the foot of the altitude on $AB$, then restricted to the side $AB$, the maximal gradient of the torsion function occurs strictly between $F$ and $M$.}
\label{fig32}
\end{figure}

Using the same reflection argument as in the proof of Proposition~\ref{PropCompareLength}, one obtains
\begin{equation*}
{u}(x, y) - {u}(2\lambda - {x}, {y}) < 0 \quad\text{for } {x} > \lambda > \ell_{0}/2, 
\end{equation*}
and 
\begin{equation} \label{eq305a}
u_{x} < 0 \text{ in } \Delta_{MBQ}.
\end{equation}
Furthermore, we have $u_{x} \leq 0$ on $QM$, with equality if and only if $F = M$, i.e., $|CA| = |CB|$. Since 
\begin{equation*}
\begin{cases}
\Delta u_{x} = 0 & \text{in } \triangle_{MBQ}, \\
u_{x} = 0 & \text{on } MB, \\
u_{x} < 0 & \text{in the interior of } \triangle_{MBQ}, 
\end{cases}
\end{equation*}
the Hopf lemma yields 
\begin{equation*}
{u}_{xy}(p) < 0 \quad \text{for all } p \in MB \setminus \{B, M\}.
\end{equation*}
On \(AB\) we have \(u_{x} = 0\), so \(|\nabla u|^{2} = u_{y} ^{2}\) along \(AB\), and thus \(|\nabla u|^{2}\) has no critical point on $MB \setminus \{B, M\}$. Consequently, any critical point of $|\nabla u|^2$ on $AB$ must lie on the segment $AM$. 


Next, using a similar reflection argument with respect to the line \(CF\), and noting that the reflection of \(\triangle_{AFC}\) across \(CF\) is still contained in \(\triangle_{ABC}\), we obtain 
\begin{equation} \label{eq305b}
u_{x} < 0 \text{ in } \Delta_{CAF}, 
\end{equation}
provided the angle at \(A\) is acute. Moreover, 
\begin{equation} \label{eq305c}
u_{x} = 0 \text{ on } {CF} \quad \text{if and only if} \quad |CA| = |CB|.
\end{equation}
Assume now that \(A\) is acute. Then \eqref{eq305b} and the Hopf lemma give 
\begin{equation*}
{u}_{xy}(p) > 0 \quad \text{for all } p \in AF \setminus \{A, F\}. 
\end{equation*}
Therefore, $|\nabla u|^2$ has no critical point on $AF \setminus \{A, F\}$, which implies that every critical point of $|\nabla u|^2$ on $AB$ must lie on the segment $FM$. 
This completes the proof.
\end{proof}

\begin{proof}[Proof of Theorem \ref{thm14LongestSide}]
Observe that any fail point in the triangle $\triangle_{ABC}$ must be a critical point of the squared norm of the gradient of the torsion function on some side. By combining Proposition \ref{PropCompareLength} and Proposition \ref{PropLocation}, the theorem follows immediately.
\end{proof}

\begin{figure}[htp]
\centering
\begin{tikzpicture}[scale = 2]
\pgfmathsetmacro\jiaoA{70}; 
\pgfmathsetmacro\LenB{4.0000}; 
\pgfmathsetmacro\LenC{\LenB*0.4000}; 
\pgfmathsetmacro\Cx{\LenC*cos(\jiaoA)}; 
\pgfmathsetmacro\Cy{\LenC*sin(\jiaoA)}; 
\pgfmathsetmacro\lam{0.5}; 
\pgfmathsetmacro\ex{ - 2*\lam + \LenC*sin(\jiaoA) - tan(\jiaoA/2)*\LenC*cos(\jiaoA)}; 
\pgfmathsetmacro\ey{\LenC*sin(\jiaoA)*tan(\jiaoA/2) + \LenC*cos(\jiaoA)}; 
\pgfmathsetmacro\ez{tan(\jiaoA/2)*tan(\jiaoA/2) + 1}; 
\pgfmathsetmacro\Ex{(\ex*tan(\jiaoA/2) + \ey)/\ez}; 
\pgfmathsetmacro\Ey{( - \ex + tan(\jiaoA/2)*\ey)/\ez}; 
\pgfmathsetmacro\Px{\lam/(tan(\jiaoA) - tan(\jiaoA/2))}; 
\pgfmathsetmacro\Py{tan(\jiaoA)*\Px}; 
\pgfmathsetmacro\Qx{(\LenC*sin(\jiaoA)*\LenB/(\LenB - \LenC*cos(\jiaoA)) - \lam)/(tan(\jiaoA/2) + \LenC*sin(\jiaoA)/(\LenB - \LenC*cos(\jiaoA)))}; 
\pgfmathsetmacro\Qy{tan(\jiaoA/2)*\Qx + \lam}; 
\node[above = 4pt, left] at (0, 0) {$A$}; 
\node[above = 4pt, right] at (\LenB, 0) {$B$}; 
\node[below = 2pt, left] at (\Cx, \Cy) {$C$}; 
\draw[thick] (0, 0) -- (\LenB, 0) -- (\Cx, \Cy) -- cycle; 
\fill (\Ex, \Ey) circle (0.02) node[below right] {\small ${C_{\lambda}}$}; 
\fill (\Px, \Py) circle (0.02) node[above left] {}; 
\fill (\Qx, \Qy) circle (0.02) node[below] {}; 
\fill[gray, green, draw = black] (\Px, \Py) -- (\Cx, \Cy) -- (\Qx, \Qy) -- cycle; 
\fill[gray, yellow, draw = black, dashed] (\Px, \Py) -- (\Ex, \Ey) -- (\Qx, \Qy) -- cycle; 
\draw[red, thick] ({\Px - 0.3*(\Qx - \Px)}, {\Py - 0.3*(\Qy - \Py)}) -- ({\Px + 1.28*(\Qx - \Px)}, {\Py + 1.28*(\Qy - \Py)}) node[right] {\normalsize $l: \; y = \lambda + x\tan\tfrac{\theta}{2}$}; 
\end{tikzpicture}
\caption{The moving plane method for the semilinear case}
\label{fig33MMP}
\end{figure}

The two results above also apply to positive solutions of the semilinear equation
\begin{equation}\label{eq307sl}
\begin{cases}
- \Delta u = f(u) & \text{in } \Omega, \\
u = 0 & \text{on } \partial \Omega, 
\end{cases}
\end{equation}
where $f$ is a locally Lipschitz function. 

\begin{remark}
Let \(u\) be a positive classical solution of \eqref{eq307sl} in a planar triangle \(\Omega\). Then the maximum of \(|\nabla u|^{2}\) on \(\partial\Omega\) is attained only on the longest side of the triangle. Moreover, for any side \(\Gamma\) of \(\Omega\), let \(\tau\) denote the unit tangential direction along \(\Gamma\). Then the set 
\(
\{x \in \Gamma: |\nabla u(x)| > 0 \ \text{and}\ \partial_{\tau}|\nabla u|(x) = 0\}
\)
is nonempty and is contained in the segment joining the midpoint of \(\Gamma\) to the foot of the altitude from the opposite vertex onto \(\Gamma\). 

The proof follows standard moving-plane arguments \cite{GNN79}, rather than a direct reflection, and is therefore omitted. Figure~\ref{fig33MMP} illustrates the idea: by continuously translating the red line \(l\) from left to right until it passes through the vertex \(A\), one obtains \eqref{eq302reflection}.
\end{remark}

Next, we prove Theorem \ref{thm15Uniqueness}.

\begin{proof}[Proof of Theorem \ref{thm15Uniqueness}]
We restrict our attention to the critical points of \(\lvert \nabla u \rvert^{2}\) along the open side \(AB\), noting that the analysis on the remaining sides \(AC\) and \(BC\) can be carried out in a similar manner. After a suitable rigid motion, we may assume that \(AB\) lies on the \(x\)-axis, with \(A\) to the left of \(B\), and that \(C\) lies in the upper half-plane. 
Note that there exists a critical point of $|\nabla {u}|^{2}$ on ${A}{B}$. An interior point ${P}$ on the side ${A}{B}$ is a critical point of $|\nabla {u}|^{2}$ if and only if ${u}_{xy}({P}) = 0$. 
We denote the nodal line of the partial derivative \( {u}_{x} \) by 
\begin{equation*}
\mathcal{Z}({u}_{x}): = \overline{\{ (x, y) \in \triangle_{{A}{B}{C}}: {u}_{x}(x, y) = 0 \}}. 
\end{equation*}

\textbf{Step 1.} We prove that the nodal line \(\mathcal{Z}({u}_{x})\) is a simple \(C^{1}\) arc connecting the vertex \({C}\) to a point \( {P} \) on the side \({A}{B}\). Since \( {u}_{x} \) is harmonic and by the structure theory for nodal sets of planar harmonic functions, the set \(\mathcal{Z}({u}_{x})\) cannot terminate in the interior of \(\triangle_{ABC}\) nor form a closed loop. Hence, \(\mathcal{Z}({u}_{x})\) is a finite union of properly immersed \(C^{1}\) arcs. Moreover, by the Hopf Lemma, 
\begin{equation*}
{u}_{x} \neq 0 \quad \text{on the interiors of the sides } {A}{C} \text{ and } {B}{C}.
\end{equation*}
Thus, each immersed \(C^{1}\) arc in \(\mathcal{Z}({u}_{x})\) must have two endpoints, one of which is \({C}\) and the other lies on \({A}{B}\). If, by way of contradiction, there were two such arcs, then, since \( {u}_{x} = 0 \) on \({A}{B}\), there would exist a nodal domain \(\mathcal{D}\) of \( {u}_{x} \) with 
\begin{equation*}
\partial \mathcal{D} \subset \mathcal{Z}({u}_{x}) \cup {A}{B}.
\end{equation*}
The strong maximum principle would then force \( {u}_{x} \equiv 0 \) in \(\mathcal{D}\), which is impossible. This concludes Step 1.

\textbf{Step 2}. We show that for a point ${P}$ on the interior of side ${A}{B}$, there are three cases: 
\begin{enumerate}
\item 
If \( {u}_{xy}({P}) \neq 0 \), then no nodal line of \( {u}_{x} \) emanates from \(P\).
\item 
If \( {u}_{xy}({P}) = 0 \) but \( {u}_{xxy}({P}) \neq 0 \), then there is exactly one nodal line of \( {u}_{x} \) emanating from \(P\), and the tangential line of this nodal line at \( {P} \) is perpendicular to the side \({A}{B}\).
\item 
If \( {u}_{xy}({P}) = 0 \) and \( {u}_{xxy}({P}) = 0 \), then there are at least two nodal lines of \( {u}_{x} \) emanating from \(P\).
\end{enumerate}
To facilitate this analysis, we define the function \({v}\) as the odd extension of \({u}_{x}\) with respect to the side \({A}{B}\); namely, \({v}\) is defined on the double domain \(\tilde{\Omega}\) by
\begin{equation*}
{v}(x, y) = 
\begin{cases}
{u}_{x}(x, y) & \text{ when } y > 0, \\
0 & \text{ when } y = 0, \\
- {u}_{x}(x, - y) & \text{ when } y < 0.
\end{cases}
\end{equation*}
Then \({v}\) is harmonic in \(\tilde{\Omega}\) and is not identically zero. If \( {u}_{xy}({P}) \neq 0 \) (that is, if \( {v}_{y}({P}) \neq 0 \)), then the implicit function theorem ensures that, in a neighborhood of \(P\), the nodal set of \( {v} \) coincides with the line \({A}{B}\). On the other hand, if \( {u}_{xy}({P}) = 0 \) (which implies \( {v}_{y}({P}) = 0 \)) and noting that \( {v}_{x}(P) = {u}_{xx}(P) = 0 \), we have
\begin{equation*}
{v}(P) = 0, \quad {v}_{x}(P) = {v}_{y}(P) = 0.
\end{equation*}
Since \( {v} \not\equiv 0 \), by \cite[Proposition 4.1]{HHH99} the point \(P\) is a zero of \( {v} \) of order \( {k} \) for some integer \( k \geq 2 \). Moreover, the zero set \(\{v = 0\}\) consists of exactly \(2{k}\) curves emanating from \(P\) in \(\tilde{\Omega}\), and the tangents to these branch curves partition the full circle into \(2{k}\) equal angles. In view of the odd symmetry of \( {v} \) about \({A}{B}\), there are exactly \( {k} - 1 \) of these nodal lines correspond to distinct branches of \( {u}_{x} \) that are contained in the interior of \(\triangle_{ABC}\). Furthermore, from the equation satisfied by \( {u} \), one may deduce that
\begin{equation*}
{v}_{xx}(P) = 0 \quad \text{and} \quad {v}_{yy}(P) = \bigl({u}_{yy}\bigr)_{x} (P) = \bigl( - 1 - {u}_{xx}\bigr)_{x} (P) = - {u}_{xxx}(P) = 0.
\end{equation*}
The case \( {k} = 2 \) occurs if and only if the hessian \( {D}^{2}{v}(P) \) has a nonzero determinant, that is, \( {v}_{xy}(P) \neq 0 \). This completes Step 2.

Observe that there exists a critical point \( {P} \) of \(|\nabla {u}|^{2}\) on the interior of \({A}{B}\). By Step 2, there exists a nodal line of \({u}_{x}\) emanating from \({P}\). From Step 1, the endpoints of the simple nodal curve \(\mathcal{Z}({u}_{x})\) must be \({C}\) and \({P}\). Combining this with Step 2, we conclude that \({P}\) must be the unique critical point of \(|\nabla {u}|^{2}\) on the interior of \({A}{B}\), and there is exactly one nodal line of \({u}_{x}\) emanating from \({P}\). 
In summary, 
\begin{equation}\label{eq308}
\begin{aligned}
&u_{xy}(P) = 0 \quad\text{and}\quad u_{xxy}(P)\neq 0, \\
&\text{the nodal set \(\mathcal{Z}(u_{x} )\) is an analytic curve connecting \(P\) and \(C\), and}\\
&\text{the tangent line to \(\mathcal{Z}(u_{x} )\) at \(P\) is perpendicular to the side \(AB\).}
\end{aligned}
\end{equation}
This implies that the critical point \(P\) of \(|\nabla u|^{2}\) on \(AB\) is nondegenerate.
This completes the proof. 
\end{proof}

By Theorem \ref{thm14LongestSide} and Theorem \ref{thm15Uniqueness}, we arrive at the following corollary: 
\begin{corollary}
For any triangle, the number of fail points equals the number of longest sides. Consequently, a triangle is equilateral if and only if it has exactly three fail points.
\end{corollary}

Last, we remark that through the study of finding the location of fail points on triangles, we obtain another rigidity result for isosceles triangles as a byproduct. 

%

\begin{corollary} 
Let \(\Omega\) be an open triangle, and let \(u\) denote the torsion function in \(\Omega\).
Then \(\Omega\) is isosceles if and only if there exists a unit vector \(\gamma\) such that the nodal line
\begin{equation*}
\mathcal{Z}(\partial_{\gamma}u) = \overline{\{x \in \Omega: \partial_{\gamma}u(x) = 0\}}
\end{equation*}
contains a straight line segment of positive length.
\end{corollary}

\begin{proof}
The ``only if'' direction follows from \eqref{eq305c}. We now prove the ``if'' direction.

Assume that there exist a unit vector \(\gamma\) and a line \(L\) such that
\(\mathcal{Z}(\partial_{\gamma}u)\) contains a nontrivial line segment in \(L \cap \Omega\). Since \(\partial_{\gamma}u\) is harmonic, hence real-analytic in \(\Omega\), 
it follows that \(\partial_{\gamma}u\equiv 0\) on \(\overline{L_{0}}\) where \({L}_{0} = {L} \cap \Omega\). 

If an endpoint of \(L_{0}\) lies in the interior of a side of \(\Omega\), then the Hopf lemma implies
that the inward normal derivative of \(u\) at that point is strictly positive. Since
\(\mathcal{Z}(\partial_{\gamma}u)\supset L_{0}\), this forces \(\gamma\) to be tangent (hence parallel) to
the side whose interior contains that endpoint.

Therefore, after relabeling if necessary, we may assume that one endpoint \(P\) of \(L_{0}\) lies in the
interior of the side \(AB\), while the other endpoint is the opposite vertex \(C\). Place \(AB\) on the
horizontal axis and choose coordinates so that \(\gamma\) is horizontal. By \eqref{eq308}, the nodal line
\(\mathcal{Z}(u_{x} ) = \mathcal{Z}(\partial_{\gamma}u)\) has tangent at \(P\) perpendicular to \(AB\); hence the
segment \(\overline{L_{0}} = CP\) is perpendicular to \(AB\). In particular, \(CP\perp AB\) and
\(\partial_{\gamma}u = 0\) on \(CP\). Applying \eqref{eq305c} yields \(|CA| = |CB|\). Thus \(\triangle_{ABC}\) is
isosceles with base \(AB\).
\end{proof}

\section{Fail points on nearly equilateral triangles}\label{Sect4EquTri}

In this section, we further analyze the gradient norm of the torsion function on nearly equilateral triangles. Our goal is to answer Questions \ref{OpenQue16}-\ref{OpenQue17} for some nearly equilateral triangles.

%
We first need to prove Theorem~\ref{thm18ctm}. To do so, we let \(u(t; x, y)\) be the torsion function in \(\Omega_{t} = \triangle_{A_{t}B_{t}C_{t}}\), where \(A_{t} = ( - \sqrt{3}/3, 0)\), \(B_{t} = (\sqrt{3}/3 + t, 0)\), and \(C_{t} = (0, 1)\). 
Consider the smooth mapping
\begin{align} \label{eqCtm02}
F_{t}(x, y) = \bigg( \Big(1 + \frac{\sqrt{3}}{2} t \Big)x - \frac{y - 1}{2} t, \, y \bigg).
\end{align}
The map \(F_{t}\) is linear and fixes \(A_{0}\) and \(C_{0}\) while sending \(B_{0}\) to \(B_{t}\). In particular, \(\Omega_{t} = F_{t}(\Omega_{0})\). A direct computation shows that the pullback \(v(t; x, y): = u(t; F_{t}(x, y))\) satisfies
\begin{align}
\begin{cases}
\left(1 + \frac{t^{2}}{4}\right)\left(1 + \frac{\sqrt{3}}{2}t\right)^{ - 2}v_{xx} + t\left(1 + \frac{\sqrt{3}}{2}t\right)^{ - 1}v_{xy} + v_{yy} = - 1 & \text{in } \Omega_{0}, \\
v = 0 & \text{on } \partial\Omega_{0}. 
\end{cases}
\end{align}
By standard elliptic regularity and the implicit function theorem, the map $t \mapsto v(t; \cdot)$ is smooth from $[0, 1]$ to $C^{1}(\overline{\Omega_{0}})$; in particular, $v(t; x, y)$ depends smoothly on $t$. Assume that $v(t; x, y) = v_{0}(x, y) + tv_{1}(x, y) + t^2 v_{2}(x, y) + O(t^3)$ as $t \to 0$. Then $v_{0}$ and ${v}_{1}$ satisfy
\begin{align}
\begin{cases}
- \Delta v_{0} = 1 & \text{in } \Omega_{0}, \\
v_{0} = 0 & \text{on } \partial\Omega_{0}, 
\end{cases}
\end{align}
and 
\begin{align}\label{eqCtm06}
\begin{cases}
- \Delta v_{1} = (v_{0})_{xy} - \sqrt{3}(v_{0})_{xx} & \text{in } \Omega_{0}, \\
v_{1} = 0 & \text{on } \partial\Omega_{0}. 
\end{cases}
\end{align}
One can deduce that 
\begin{equation}\label{eqCtm07v0}
v_{0}(x, y) = \frac{1}{4}y(y - 1 + \sqrt{3} x)(y - 1 - \sqrt{3} x) = \frac{1}{4}y^{3} - \frac{1}{2}y^{2} + \frac{1}{4}y - \frac{3}{4}x^{2}y, 
\end{equation}
and 
\begin{equation}\label{eqCtm07v0xx}
(v_{0})_{xx}(x, y) = - \frac{3y}{2}, \quad (v_{0})_{xy}(x, y) = - \frac{3x}{2}.
\end{equation}
Therefore, \eqref{eqCtm06} becomes
\begin{align}\label{eqCtm08v1}
\begin{cases}
- \Delta v_{1} = \frac{3\sqrt{3}}{2}y - \frac{3x}{2} & \text{in } \Omega_{0}, \\
v_{1} = 0 & \text{on } \partial\Omega_{0}.
\end{cases}
\end{align}

In order to prove Theorem \ref{thm18ctm}, we shall first derive the formula for the nodal line of $u_{x} $. Using the notations introduced above, we have the following proposition.

\begin{proposition}\label{prop41Step2}
Let $\Omega_{t}$ be the triangle as in Theorem \ref{thm18ctm} and $u(t; \cdot)$ be the torsion function on $\Omega_{t}$. Then, there exists a constant $\delta_{0} > 0$ and a function $\varphi(t; y)$, such that the nodal line of $u_{x} (t; \cdot)$ is given by $\mathcal{N}_{t} = \{(\varphi(t; y), y): y \in (0, 1)\}$ when $|t| < \delta_{0}$. $\mathcal{N}_{t}$ splits $\Omega_{t}$ into two connected subregions, and the fail point $p_{t}$ is exactly the contact point of $\mathcal{N}_{t}$ with $A_{t}B_{t}$. Moreover, we have the formula
\begin{align}
\varphi(t; y) = t\Big(\frac{2(v_{1})_{x}(0, y)}{3y} - \frac{y}{2} + \frac{1}{2}\Big) + O(t^{2}) \quad \text{when } |t| < \delta_{0}, 
\end{align}
where $v_{1}$ is the solution to \eqref{eqCtm08v1}.
\end{proposition}
\begin{proof}
By the analysis in Section~\ref{Sect3Triangle}, starting from the fail point $p_{t} \in A_{t}B_{t}$, there is a nodal line $\mathcal{N}_{t}$ of $u_{x} $ that enters the interior of the triangle and terminates at the vertex $C_{t}$. By the maximal principle, $\mathcal{N}_{t}$ splits $\Omega_{t}$ into two subregions: on the left of $\mathcal{N}_{t}$, one has $u_{x} > 0$, whereas on the right of $\mathcal{N}_{t}$, one has $u_{x} < 0$.

Since $v(0; x, y)$ is the torsion function in the equilateral triangles $\Omega_{0}$, we have $v_{x}(0; 0, y) = 0, v_{xx}(0; 0, y) = - 3y/2$. Let 
\begin{align*}
V(t; x, y): = 
\begin{cases}
\frac{v_{x}(t; x, y)}{y}, \quad &y > 0\\
v_{xy}(t; x, 0), &y = 0\\
- \frac{v_{x}(t; x, - y)}{y}, &y < 0
\end{cases}
\end{align*}
Then $V(t; x, y)$ is a smooth function defined on the doubled domain obtained by reflecting $\Omega_{0}$ across $A_{0}B_{t}$. Moreover, $V(0; 0, y) = 0$ and $V_{x}(0; 0, y) = - 3/2$. By the implicit function theorem, there exist $\delta_{0} > 0$ and a smooth function $\psi(t; y)$ defined for $|t| < \delta_{0}$ and $|y| < 1$ such that $\psi(0; y) = 0$ and for \(|t| < \delta_{0}\) and \(|y| < 1\) the equation \(V(t; x, y) = 0\) has the unique solution \(x = \psi(t; y)\). Since $v_{x}(t; x, y) = (1 + \sqrt{3}t/2)u_{x}(t; F_{t}(x, y))$, where $F_{t}(\cdot)$ is defined by \eqref{eqCtm02}, there exists a function $\varphi(t; y)$ that depends smoothly on $t$, such that 
\begin{equation*}
\mathcal{N}_{t} = \{(\varphi(t; y), y): y \in [0, 1)\} \quad \text{for } |t| < \delta_{0}.
\end{equation*}
Noting that \(u(0; x, y)\) is symmetric with respect to the \(y\)-axis, we have \(\varphi(0; y) = 0\). Writing
\begin{equation*}
\varphi(t; y) = \varphi_{0}(y) + t \varphi_{1}(y) + O(t^{2}) \quad \text{as } t \to 0,
\end{equation*}
it follows that \(\varphi_{0}(y) = 0\). The inverse map $F_{t}^{ - 1}$ of $F_{t}$ is given by 
\begin{equation}\label{eqCtm11Finv}
F_{t}^{-1}(x, y)
= \Bigl(\frac{2x + t(y-1)}{2 + \sqrt{3}t}, \, y\Bigr).
\end{equation}
Therefore, 
\begin{align*}
F_{t}^{ - 1}\bigl(\varphi(t; y), y\bigr)
& = \bigl(\varphi(t; y), y\bigr)
+ t\Big(\frac{y - 1}{2} - \frac{\sqrt{3}}{2}\, \varphi(t; y), \, 0\Big) + O(t^{2})\\
& = \bigl(\varphi_{0}(y), y\bigr)
+ t\Big(\varphi_{1}(y) + \frac{y - 1}{2} - \frac{\sqrt{3}}{2}\, \varphi_{0}(y), \, 0\Big) + O(t^{2}), 
\end{align*}
and 
\begin{align*}
0 & = v_{x}\Bigl(t; F_{t}^{ - 1}\bigl(\varphi(t; y), y\bigr)\Bigr) \\ 
& = (v_{0})_{x}\Bigl(F_{t}^{ - 1}\bigl(\varphi(t; y), y\bigr)\Bigr)
+ t\, (v_{1})_{x}\Bigl(F_{t}^{ - 1}\bigl(\varphi(t; y), y\bigr)\Bigr) + O(t^{2})\\
& = (v_{0})_{x}\bigl(\varphi_{0}(y), y\bigr)
+ t\, (v_{0})_{xx}\bigl(\varphi_{0}(y), y\bigr)
\Bigl(\varphi_{1}(y) - \frac{\sqrt{3}}{2}\varphi_{0}(y) + \frac{y - 1}{2}\Bigr)\\
&\qquad + t\, (v_{1})_{x}\bigl(\varphi_{0}(y), y\bigr) + O(t^{2}).
\end{align*}
Substituting \(\varphi_{0}(y) = 0\) and the explicit formula for \(v_{0}\), we obtain
\((v_{0})_{xx}(0, y) = - \frac{3y}{2}\). Solving for \(\varphi_{1}(y)\) yields
\begin{align} 
\varphi_{1}(y) = \frac{2(v_{1})_{x}(0, y)}{3y} - \frac{y}{2} + \frac{1}{2}, 
\qquad y\in(0, 1).
\end{align}
This completes the proof.
\end{proof}

For $t > 0$, the side ${A}_{0}B_{t}$ is strictly longer than the other two sides, and hence, by
Theorem~\ref{thm14LongestSide}, the fail point lies on ${A}_{0}B_{t}$. 
By Theorem~\ref{thm15Uniqueness} and Proposition~\ref{prop41Step2}, the fail point $p_{t} = (x(t), 0)$ satisfies
\begin{align} \label{eqCtm14a}
x(t) = \varphi(t; 0) = t\varphi_{1}(0) + O(t^{2}), 
\end{align}
as $t \to 0^{ + }$, where
\begin{align}\label{eqCtm14bVar1}
\varphi_{1}(0) = \frac{2}{3}(v_{1})_{xy}(0, 0) + \frac{1}{2}.
\end{align}

We are now ready to prove Theorem~\ref{thm18ctm}. By \eqref{eqCtm14a} and \eqref{eqCtm14bVar1}, the key
step is to estimate \((v_{1})_{xy}(O)\), where \(O = (0, 0)\). Since no explicit formula for \(v_{1}\) is available, we use a
barrier argument. The proof is as follows. 

\begin{proof}[Proof of Theorem \ref{thm18ctm}]
Let $w_{1}$ and $w_{2}$ be the unique solutions of 
\begin{align}\label{eqCtm16a}
\begin{cases}
- \Delta w_{1} = \frac{3\sqrt{3}}{2}y & \text{in } \Omega_{0}, \\
w_{1} = 0 & \text{on } \partial\Omega_{0}, 
\end{cases}
\end{align}
and
\begin{align}\label{eqCtm16b}
\begin{cases}
- \Delta w_{2} = \frac{3}{2}x & \text{in } \Omega_{0}, \\
w_{2} = 0 & \text{on } \partial\Omega_{0}. 
\end{cases}
\end{align}
Then $v_{1}(x, y) = w_{1}(x, y) - w_{2}(x, y)$, and hence $(v_{1})_{xy}(x, y) = (w_{1})_{xy}(x, y) - (w_{2})_{xy}(x, y)$. Note that $w_{1}(x, y) = w_{1}( - x, y)$ (by symmetry of $\Omega_{0}$ and uniqueness of solutions), which
implies $(w_{1})_{x}(0, y) = 0$. Consequently, $(w_{1})_{xy}(O) = 0$, and therefore
\begin{align*} 
(v_{1})_{xy}(O) = - (w_{2})_{xy}(O). 
\end{align*}

We next estimate \((w_{2})_{xy}(O)\) by comparing the Taylor expansion of \(w_{2}\) with a suitably
chosen barrier function near the origin. 
By uniqueness of solutions, the anti-symmetry of the right-hand side in the equation for \(w_{2}\) and the symmetry of \(\Omega_{0}\) with respect to the \(y\)-axis, we know that \(w_{2}\) is
anti-symmetric with respect to the \(y\)-axis. In particular, \(w_{2}\) vanishes on the boundary of
the right half-triangle \(\triangle_{OB_{0}C_{0}}\). By the maximum principle, 
\(w_{2} > 0\) in \(\triangle_{OB_{0}C_{0}}\). Since \(w_{2} = 0\) when \(xy = 0\), it follows from the equation for \(w_{2}\) that 
\begin{equation}\label{eqCtm17w2}%
w_{2}(x, y) = xy\big( (w_{2})_{xy}(O) - \frac{3}{4}y + O(x^2 + y^2)\big) \quad \text{as } |x^2 + y^2| \to 0.
\end{equation}
The positivity of \(w_{2}\) in \(\triangle_{OB_{0}C_{0}}\) therefore forces
\begin{equation}
(w_{2})_{xy}(O) > 0.
\end{equation}
%
%
%
%
We choose the barrier function 
\begin{align}\label{eqCtm18barrier}
g(x, y) = \frac{1}{8}xy\Big(\frac{3}{2}(1 - y)^{2} - \frac{9}{2}x^{2} + (1 - y)^{3} - 3\sqrt{3}x^{3}\Big), 
\end{align}
which satisfies
\begin{align*}
\begin{cases}
- \Delta g = \frac{3}{2}x + \frac{3}{2}xy^{2} + \frac{9\sqrt{3}}{2}x^{2}y & \text{in $\triangle_{OB_{0}C_{0}}$}\\
g = 0 & \text{on $\partial \triangle_{OB_{0}C_{0}}$}
\end{cases}
\end{align*}
The factor \(xy\) ensures that \(g\) vanishes on the sides \(OC_{0}\) and \(OB_{0}\). The remaining factor is chosen so that \(g\) also vanishes on \(B_{0}C_{0}\), since \(1 - y = \sqrt{3}x\) for \((x, y) \in B_{0}C_{0}\). Hence 
\begin{equation*}
\begin{cases}
- \Delta (g - w_{2}) = \frac{3}{2}xy^{2} + \frac{9\sqrt{3}}{2}x^{2}y > 0 & \text{in $\triangle_{OB_{0}C_{0}}$}\\
g - w_{2} = 0 & \text{on $\partial \triangle_{OB_{0}C_{0}}$}
\end{cases}
\end{equation*}
%
By the maximum principle, \(g - w_{2} > 0\) in \(\triangle_{OB_{0}C_{0}}\). Using Serrin's boundary lemma at a corner (see, e.g., \cite[Lemma~2]{Ser71} or \cite[Lemma~S]{GNN79}), for $\vec{s} = (1, 1)/\sqrt{2}$, we have 
\begin{equation*}
\text{either} \quad \partial_{\vec{s}}(g - w_{2})(O) > 0 \quad \text{or} \quad \partial_{\vec{s}\vec{s}}(g - w_{2})(O) > 0.
\end{equation*} 
Since $|\nabla (g - w_{2})|(O) = 0$, $(g - w_{2})_{xx}(O) = (g - w_{2})_{yy}(O) = 0$ and \(\partial_{\vec{s}\vec{s}} = (\partial_{xx} + 2\partial_{xy} + \partial_{yy})/2\), we must have $\partial_{\vec{s}}(g - w_{2})(O) = 0$, and hence
\begin{equation}
\partial_{xy}(g - w_{2})(O) = \partial_{\vec{s}\vec{s}}(O) > 0.
\end{equation}
Therefore, \((w_{2})_{xy}(O) < g_{xy}(O) = 5/16\), and hence
\begin{align} \label{eqCtm20}
0 < (w_{2})_{xy}(O) < \frac{5}{16}.
\end{align}
Combining this with $(v_{1})_{xy}(O) = - (w_{2})_{xy}(O)$, we obtain 
\begin{equation}\label{eqCtm21a}
- \frac{5}{16} < (v_{1})_{xy}(O) < 0.
\end{equation}
It follows that $\partial_{t}\varphi(0; 0) = \varphi_{1}(0) = (2/3)(v_{1})_{xy}(O) + 1/2$ satisfies 
\begin{equation*}
\frac{7}{24} < \partial_{t}\varphi(0; 0) < \frac{1}{2}.
\end{equation*}
Consequently, 
\begin{equation*} 
\frac{7}{24}t < x(t) = \varphi(t; 0) < \frac{1}{2} t 
\end{equation*}
for \(t > 0\) sufficiently small. This completes the proof of the estimate \eqref{estimatedinterval} for
the location of the fail point.

Next, we prove~\eqref{compare1}.
By~\eqref{eqCtm11Finv}, a direct computation shows that
\begin{equation*}
F_{t}^{ - 1}(x, y) = \Big(x + t\big(\frac{y - 1}{2} - \frac{\sqrt{3}}{2}x\big) + t^2\big(\frac{3}{4}x - \frac{\sqrt{3}}{4}(y - 1)\big) + O(t^3), y\Big).
\end{equation*}
Since $u(t; x, y) = v(t; F_{t}^{ - 1}(x, y))$, we have 
\begin{align} \label{eqCtm23cangzhi1}
u_{y} (t; x, y) = v_{x} \left(t; F_{t}^{ - 1}(x, y)\right)\Big(\frac{t}{2} - \frac{\sqrt{3}}{4}t^2\Big) + v_{y} \left(t; F_{t}^{ - 1}(x, y)\right) + O(t^3), 
\quad t \to 0.
\end{align}
Recalling that $v(t; x, y) = v_{0}(x, y) + tv_{1}(x, y) + t^2 v_{2}(x, y) + O(t^3)$, the explicit formula for \(v_{0}\), Taylor expansion, and straightforward computations using~\eqref{eqCtm23cangzhi1} yield
\begin{align*}
u_{y} (t; \frac{t}{2}, 0) - u_{y} (t; 0, 0) = \frac{t^2}{2}\left((v_{1})_{xy}(O) + \frac{3}{8}\right) + O(t^3).
\end{align*}
Combining this with the key estimate~\eqref{eqCtm21a}, we obtain
\begin{align*}
u_{y} (t; \frac{t}{2}, 0) - u_{y} (t; 0, 0) > \frac{1}{32}t^2, 
\end{align*}
provided \(0 < |t| \ll 1\). This completes the proof.
\end{proof}

\begin{remark}
The barrier function given by \eqref{eqCtm18barrier} was constructed after many trials, and the interval estimation \eqref{estimatedinterval} of the fail point obtained through this function is currently the most refined version we have achieved.
\end{remark}

We conclude this section by considering another perturbation of the equilateral triangle. We obtain the following result on the gradient of the torsion function along the second-longest side.

\begin{theorem} \label{thm18vertical}
Let $\{\Omega_{t}\}_{t \geq 0}$ be the family of triangles with vertices $A_{t} = ( - \sqrt{3}/3, 0)$, $B_{t} = (\sqrt{3}/3, 0)$ and $C_{t} = (t, 1)$, and let $u(t; x, y)$ denote the torsion function on $\Omega_{t}$. 
Then, for $t \to 0^{ + }$, the (unique) critical point of $x \mapsto \lvert \nabla u(t; x, 0)\rvert^{2}$ is given by $x(t)$, where 
\begin{align}\label{estimatedinterval2}
0 < x(t) < \frac{5}{12}t.
\end{align}
Moreover, 
\begin{align} \label{compare2}
u_{y} (t; 0, 0) - u_{y} (t; t, 0) > \frac{1}{8}t^2.
\end{align}
\end{theorem}

\begin{rmks}
The triangle above is constructed by moving the top vertex of an equilateral triangle to the right while fixing the base side. On the base side, the midpoint is given by $(0, 0)$ and the foot of the altitude is given by $(0, t)$, and thus according to \eqref{estimatedinterval2}, $|x(t) - 0| < |x(t) - t|$ for $t > 0$ being small. Therefore, the theorem above also shows that restricted to the base side, the critical point of the norm of the gradient of the torsion function is closer to the midpoint. Also, \eqref{compare2} implies that the norm of the gradient has a larger value at the midpoint.
\end{rmks}

To prove Theorem~\ref{thm18vertical}, we still let \(\mathcal{N}_{t}\) denote the nodal line of \(u_{x} (t; x, y)\). As above, for \(t > 0\) sufficiently small, \(\mathcal N_{t}\) can be written as the graph of a function of \(y\), which we again denote by \(\varphi\). 
Define 
\begin{equation*}
F_{t}(x, y) = (x + ty, y) \quad \text{ and } \quad v(t; x, y) = u(t; F_{t}(x, y)).
\end{equation*}
The map \(F_{t}\) is linear and sends \(\Omega_{0}\) onto \(\Omega_{t}\). One checks that \(v(t; \cdot)\) satisfies 
\begin{align} 
\begin{cases}
(1 + t^{2})v_{xx} + v_{yy} - 2tv_{xy} = - 1 & \text{ in } \triangle ABC_{0}, \\
v = 0 & \text{ on } \partial \triangle ABC_{0}.
\end{cases}
\end{align}
By standard elliptic regularity and the implicit function theorem, the map $t \mapsto v(t; \cdot)$ is smooth from $[0, 1]$ to $C^{1}(\overline{\Omega_{0}})$; in particular, $v(t; x, y)$ depends smoothly on $t$. Assume that $v(t; x, y) = v_{0}(x, y) + tv_{1}(x, y) + t^2 v_{2}(x, y) + O(t^3)$ as $t \to 0$. Then \(v_{0}\) is the torsion function on \(\Omega_{0}\), and \(v_{1}\) satisfies 
\begin{align*}
\begin{cases}
\Delta v_{1} = 2(v_{0})_{xy} & \text{in } \Omega_{0}, \\
v_{1} = 0 & \text{on } \partial\Omega_{0}. 
\end{cases}
\end{align*}
Recalling that \(v_{0}\) is given explicitly by \eqref{eqCtm07v0}, we have \(2(v_{0})_{xy}(x, y) = - 3x\). Therefore \(v_{1}\) is determined by 
\begin{align*}
\begin{cases}
- \Delta v_{1} = 3x & \text{in } \Omega_{0}, \\
v_{1} = 0 & \text{on } \partial\Omega_{0}. 
\end{cases}
\end{align*}

With these preparations, the proof of Theorem \ref{thm18vertical} goes as follows.

\begin{proof}[Proof of Theorem \ref{thm18vertical}]
By the definition of \(\varphi\), we have \(u_{x}(t; \varphi(t; y), y) = 0\) and \(\varphi(0; y) = 0\). Thus we
may expand
\begin{align}\label{eqCtm32}
\varphi(t; y) = t\varphi_{1}(y) + O(t^{2}). 
\end{align}
Recalling that \(v_{x}(t; \varphi(t; y) - ty, y) = 0\), we compute
\begin{align*}
& \quad\; v_{x}(t; \varphi(t; y) - ty, y) 
\\
& = (v_{0})_{x}(0, y) + t\Big((v_{0})_{xx}(0, y)(\varphi_{1}(y) - y) + (v_{1})_{x}(0, y)\Big) + O(t^{2})
\\ 
& = t\big( - \frac{3}{2}y(\varphi_{1}(y) - y) + (v_{1})_{x}(0, y)\big) + O(t^{2}), 
\end{align*}
where we used \((v_{0})_{x}(0, y) = 0\) and \((v_{0})_{xx}(0, y) = - 3y/2\) (see \eqref{eqCtm07v0xx}). This yields
\begin{align} \label{eqCtm33varphi1}
\varphi_{1}(y) = \frac{2}{3}\frac{(v_{1})_{x}(0, y)}{y} + y \text{ and }
\varphi_{1}(0) = \frac{2}{3}(v_{1})_{xy}(O).
\end{align}
Clearly, \(v_{1} = 2w_{2}\), where \(w_{2}\) is given in \eqref{eqCtm16b} and satisfies
\eqref{eqCtm20}. Therefore, 
\begin{align} \label{eqCtm34UpperBoundV1xy}
0 < (v_{1})_{xy}(O) < \frac{5}{8}.
\end{align}
Combining \eqref{eqCtm32}, \eqref{eqCtm33varphi1}, and \eqref{eqCtm34UpperBoundV1xy}, we obtain
\begin{equation*}
0 < \varphi(t; 0) < \frac{5}{12}t, 
\end{equation*}
for \(t > 0\) sufficiently small. This completes the proof of \eqref{estimatedinterval2}.

Next, we prove~\eqref{compare2}. 
Since \(u(t; x, y) = v(t; x - ty, y)\), we have
\begin{align} \label{eqCtm35cangzhi2}
u_{y} (t; x, y) = - tv_{x} (t; x - ty, y) + v_{y} (t; x - ty, y).
\end{align}
Recalling the expansion $v(t; x, y) = v_{0}(x, y) + tv_{1}(x, y) + t^2 v_{2}(x, y) + O(t^3)$, it follows from~\eqref{eqCtm35cangzhi2}, the explicit form of \(v_{0}\), and a Taylor expansion that 
\begin{align*}
u_{y} (t; 0, 0) - u_{y} (t; t, 0) = \Big(\frac{3}{4} - (v_{1})_{xy}(O)\Big)t^2 + O(t^3), \quad t \to 0. 
\end{align*}
By~\eqref{eqCtm34UpperBoundV1xy}, we conclude that
\begin{align*}
u_{y} (t; 0, 0) - u_{y} (t; t, 0) > \big(\frac{3}{4} - \frac{5}{8}\big)t^2 = \frac{1}{8}t^2, 
\end{align*}
provided \(0 < |t| \ll 1\). 
This completes the proof.
\end{proof} 


\section{Fail points on non-concentric annuli} \label{Sect5annuli}

In this section, we consider a class of multiply connected domains, namely annuli. Concentric annuli possess radial symmetry, with every point on each boundary equidistant from a common center, which facilitates straightforward mathematical analysis. In contrast, non-concentric annuli lack such symmetry, resulting in more varied and complex geometries due to varying radial distances from distinct centers. We study the location of fail points in annuli using the reflection method. 

\begin{proof}[Proof of Theorem \ref{thm19Annulus}]
Without loss of generality, we assume that the non-concentric annulus $\Omega$ is given by 
\begin{equation}
\Omega = \{x \in \R^{2}: \; |x| < \rho_{1}, \; |x - {I}| > \rho_{2}\}, 
\end{equation}
where ${I} = (\epsilon, 0)$ and $\rho_{1}$, $\rho_{2}$, and $\epsilon$ are positive constants satisfying $\rho_{2} + \epsilon < \rho_{1}$.

\textbf{Step 1}. 
We first show that fail points must be located on the inner boundary, by proving that for every outer boundary point $p$, there exists a corresponding inner boundary point $q$ such that $|\nabla u(p)| < |\nabla u(q)|$. 
In fact, fix any outer boundary point ${p}^{\theta} = (\rho_{1}\cos\theta, \rho_{1}\sin\theta)$. Let ${V}^{\theta} = (\rho\cos\theta, \rho\sin\theta)$, where $\rho = \rho_{1} + \rho_{2}$. Then ${p}^{\theta}$ is the unique intersection point of two circles $\partial{B}_{\rho_{1}}$ and $\partial{B}_{\rho_{2}}({V}^{\theta})$, see Figure \ref{fig41}. Let ${T}^{\theta}$ denote the perpendicular bisector of the line segment connecting ${V}^{\theta}$ and the center ${I} = (\epsilon, 0)$ of the inner disk, that is, 
\begin{equation}
{T}^{\theta} = \{x \in \R^{2}: 2x_{1}(\rho\cos\theta - \epsilon) + 2x_{2}\rho\sin\theta = \rho^2 - \epsilon^2\}.
\end{equation}

\begin{figure}[htp]
\centering
\begin{tikzpicture}[scale = 2]
\pgfmathsetmacro\radiO{1}; 
\pgfmathsetmacro\radiI{\radiO*0.5}; 
\pgfmathsetmacro\xI{0.4*(\radiO - \radiI)}; \pgfmathsetmacro\yI{0}; 
\pgfmathsetmacro\Jiao{31.44}; 
\pgfmathsetmacro\xC{\xI + (\radiO + \radiI)*cos(\Jiao)}; 
\pgfmathsetmacro\yC{\yI + (\radiO + \radiI)*sin(\Jiao)}; 
\pgfmathsetmacro\xQ{\xI + \radiI*cos(\Jiao)}; 
\pgfmathsetmacro\yQ{\yI + \radiI*sin(\Jiao)}; 
\pgfmathsetmacro\xP{\xC - \xQ*(\xC^2 - \yC^2)/(\xC^2 + \yC^2) - 2*\yQ*\xC*\yC/(\xC^2 + \yC^2)}; 
\pgfmathsetmacro\yP{\yC + \yQ*(\xC^2 - \yC^2)/(\xC^2 + \yC^2) - 2*\xQ*\xC*\yC/(\xC^2 + \yC^2)}; 
\pgfmathsetmacro\xD{\xP*(\radiO + \radiI)/\radiO}; 
\pgfmathsetmacro\yD{\yP*(\radiO + \radiI)/\radiO}; 
\draw (0, 0) circle (\radiO); 
\draw (\xI, \yI) circle (\radiI); 
\draw[dashed] (\xC, \yC) circle (\radiO); 
\draw[dashed] (\xD, \yD) circle (\radiI); 
\pgfmathsetmacro\xM{\xC/2 + \yC*sqrt(\radiO^2/(\xC^2 + \yC^2) - 1/4)}; 
\pgfmathsetmacro\yM{\yC/2 - \xC*sqrt(\radiO^2/(\xC^2 + \yC^2) - 1/4)}; 
\pgfmathsetmacro\xN{\xC/2 - \yC*sqrt(\radiO^2/(\xC^2 + \yC^2) - 1/4)}; 
\pgfmathsetmacro\yN{\yC/2 + \xC*sqrt(\radiO^2/(\xC^2 + \yC^2) - 1/4)}; 
\fill[gray, green] (\xM, \yM) arc ({asin(\yM/\radiO)} : {asin(\yN/\radiO)} : \radiO) -- cycle; 
\fill[gray, yellow] (\xM, \yM) arc ({ - acos((\xM - \xC)/\radiO)} : { - 180 - asin((\yN - \yC)/\radiO)} : \radiO) -- cycle; 
\draw[thick, red] ({\xM + 1.4*(\xN - \xM)}, {\yM + 1.4*(\yN - \yM)}) -- ({\xM - 0.6*(\xN - \xM)}, {\yM - 0.6*(\yN - \yM)}) node[right] {${T}^{\theta}$}; 
\draw (\xI, \yI) circle (\radiI); 
\fill (\xI, \yI) circle (0.02) node[below] {\small ${I}$}; 
\fill (0, 0) circle (0.02) node[below] {\small ${O}$}; 
\fill (\xD, \yD) circle (0.02) node[below] {\small ${V}^{\theta}$}; 
\fill (\xQ, \yQ) circle (0.02) node[below left] {\small ${q}^{\theta}$}; 
\fill (\xP, \yP) circle (0.02) node[above right] {\small ${p}^{\theta}$}; 
\draw[dotted, -> ] ( - 1.2*\radiO, 0) -- (2.4*\radiO, 0) node[right]{$x$}; 
\draw[dotted, -> ] (0, - 0.9*\radiO) -- (0, 1.65*\radiO, 0) node[right]{$y$}; 
\end{tikzpicture}
\caption{Non-concentric annulus: inner and outer boundaries}
\label{fig41}
\end{figure}

Let ${q}^{\theta} = ({q}_{1}^{\theta}, {q}_{2}^{\theta})$ denote the reflection of the point ${p}^{\theta}$ with respect to ${T}_{\theta}$, where 
\begin{align*}
{q}_{1}^{\theta} & = \frac{(\rho^2 - \epsilon^2)(\rho\cos\theta - \epsilon) - \rho_{1}(\rho^2\cos\theta - 2\rho\epsilon + \epsilon^2\cos\theta)}{\rho^2 + \epsilon^2 - 2\rho\epsilon\cos\theta}, 
\\
{q}_{2}^{\theta} & = \frac{(\rho^2 - \epsilon^2)(\rho - \rho_{1})\sin\theta}{\rho^2 + \epsilon^2 - 2\rho\epsilon\cos\theta}.
\end{align*}
By construction, ${q}^{\theta}$ lies on the inner boundary. It is straightforward to verify that 
\begin{equation}
u(x) - u^{\theta} (x) < 0 \text{ for } x \in {D}^{\theta}, 
\end{equation}
where $u^{\theta} (x): = u ({T}^{\theta}x)$, with ${T}^{\theta}x$ denoting the reflection of $x$ with respect to ${T}_{\theta}$, and ${D}^{\theta}$ is the smaller cap cut by ${T}^{\theta}$ from ${B}_{\rho_{1}}$. By the Hopf lemma, it follows that $x \cdot \nabla (u - u^{\theta}) > 0$ at $x = {p}^{\theta}$. Thus, $|\nabla u({p}^{\theta})| < |\nabla u({q}^{\theta})|$. 

\textbf{Step 2}. 
Next, we determine the precise location of the fail point on the inner boundary. 
Let ${L}_{\varphi}$ be the straight line passing through the center of the inner circle with inclination $\varphi\in(0, \pi)$, that is, ${L}_{\varphi} = \{ x \in \Omega: (x_{1} - \epsilon)\sin\varphi + x_{2}\cos\varphi = 0\}$. Define
\begin{equation*}
\tilde{D}_{\varphi} = \{ x \in \Omega: (x_{1} - \epsilon)\sin\varphi + x_{2}\cos\varphi > 0\}.
\end{equation*}

\begin{figure}[htp]
\centering
\begin{tikzpicture}[scale = 2]
\pgfmathsetmacro\radiO{1}; 
\pgfmathsetmacro\radiI{\radiO*0.5}; 
\pgfmathsetmacro\xI{0.4*(\radiO - \radiI)}; \pgfmathsetmacro\yI{0}; 
\pgfmathsetmacro\Jiaoa{63.44}; 
\pgfmathsetmacro\Jiaob{(180 + \Jiaoa)/2}; 
\pgfmathsetmacro\yM{ - \xI*sin(\Jiaob)*cos(\Jiaob) + sqrt(\radiO^2*(sin(\Jiaob))^2 - \xI^2*(sin(\Jiaob))^4)}; 
\pgfmathsetmacro\yN{ - \xI*sin(\Jiaob)*cos(\Jiaob) - sqrt(\radiO^2*(sin(\Jiaob))^2 - \xI^2*(sin(\Jiaob))^4)}; 
\pgfmathsetmacro\xM{\xI + \yM/tan(\Jiaob)}; 
\pgfmathsetmacro\xN{\xI + \yN/tan(\Jiaob)}; 
\fill[gray, green, draw = black] (\xM, \yM) arc ({acos(\xM/\radiO)} : { - acos(\xN/\radiO)} : \radiO) -- ({\xI - \radiI*cos(\Jiaob)}, {\yI - \radiI*sin(\Jiaob)}) arc (\Jiaob - 180 : \Jiaob : \radiI) -- cycle; 
\fill[gray, yellow, draw = black, dashed] (\xM, \yM) arc ({2*\Jiaob - acos(\xM/\radiO)} : {2*\Jiaob + acos(\xN/\radiO)} : \radiO) -- ({\xI - \radiI*cos(\Jiaob)}, {\yI - \radiI*sin(\Jiaob)}) arc (\Jiaob + 180 : \Jiaob : \radiI) -- cycle; 
\draw[thick, red] (\xI, \yI) -- ++ (\Jiaob : \radiO*1.4); 
\draw[thick, red] (\xI, \yI) -- ++ (\Jiaob : - \radiO*1.1) node[above = 4pt, right] {${L}_{\varphi}$}; 
\draw[dotted, -> ] ( - 1.2*\radiO, 0) -- (1.5*\radiO, 0) node[right]{$x$}; 
\draw[dotted, -> ] (0, - 1.1*\radiO) -- (0, 1.3*\radiO, 0) node[right]{$y$}; 
\draw (0, 0) circle (\radiO); 
\draw (\xI, \yI) circle (\radiI); 
\fill (\xI, \yI) circle (0.02) node[below] {\small ${I}$}; 
\fill (0, 0) circle (0.02) node[below] {\small ${O}$}; 
\fill (\xI - \radiI, 0) circle (0.02) node[right] {\small ${q_{0}}$}; 
\fill ({\xI + \radiI*cos(\Jiaoa)}, {\radiI*sin(\Jiaoa)}) circle (0.02) node[below] {\small ${q_{\varphi}}$}; 
\end{tikzpicture}
\caption{Non-concentric annulus: location of fail point}
\label{fig42}
\end{figure}

Once again, one can show that
\begin{equation}
u(x) - u_{\varphi} (x) < 0 \text{ for } x \in \tilde{D}_{\varphi}, 
\end{equation}
where $u_{\varphi} (x): = u ({L}_{\varphi}x)$, ${L}_{\varphi}x$ denoting the reflection of $x$ with respect to ${L}_{\varphi}$, and $\tilde{D}_{\varphi}$ is the smaller region cut by ${L}_{\varphi}$ from $\Omega$, see Figure \ref{fig42}. The Hopf lemma implies that $(x - {I}) \cdot \nabla (u - u_{\varphi}) < 0$ on $(\partial\tilde{D}_{\varphi} \setminus {L}_{\varphi}) \cap \partial{B}_{\rho_{2}}({I})$. In particular, 
\begin{equation*}
|\nabla u({q}_{0})| > |\nabla u({q}_{\varphi})| \text{ for } \varphi\in(0, \pi), 
\end{equation*}
where ${q}_{\varphi} = (\epsilon - \rho_{2}\cos(2\varphi), - \rho_{2}\sin(2\varphi))$. This completes the proof of Theorem \ref{thm19Annulus}.
\end{proof}

As a consequence of the theorem above, we have the following remarks.

\begin{remark}
An annulus has a unique fail point if and only if it is non-concentric. 
\end{remark}

\begin{remark}
The conclusion of Theorem \ref{thm19Annulus} remains valid for non-concentric annuli in higher dimensions. Moreover, by employing the moving plane method instead of the reflection argument, one can show that the maximum of $|\nabla u|$ on $\partial\Omega$ is uniquely attained at the point on the inner boundary closest to the center of the outer boundary, where $u$ denotes a positive solution to the semilinear equation \eqref{eq307sl} in non-concentric annuli $\Omega$. 
\end{remark}


\section*{Acknowledgments}
Research of Qinfeng Li was supported by National Natural Science Fund of China for Excellent Young Scholars (No. 12522109), National Key R\&D Program of China (No. 2022YFA1006900) and the National Science Fund of China General Program (No. 12471105). 
Research of Shuangquan Xie was supported by the Changsha Natural Science Foundation (No. KQ2208006). 
Research of Ruofei Yao was supported by Guangdong Basic and Applied Basic Research Foundation (Grant No. 2025A1515011856). 

\bibliographystyle{plain} 
\bibliography{BibLiQinfengYaoRuofei}

\end{document}